\newcommand{\leqnomode}{\tagsleft@true\let\veqno\@@leqno}
\newcommand{\reqnomode}{\tagsleft@false\let\veqno\@@eqno}
\setlist[enumerate]{leftmargin=*,align=left,labelindent=\parindent}
\newcommand{\gen}{\textup{\texttt{gen}}}
\newcommand{\Binary}{\mathsf{Bin}}
\DeclareMathOperator{\imp}{\,\rightarrow\,}
\newcommand{\dotminus}{\mathbin{\ooalign{\hss\raise.6ex\hbox{$\cdot$}\hss\crcr$-$}}}
\DeclareMathOperator{\Ind}{\textrm{\textit{Ind}}}
\newcommand{\Con}{\mathsf{Con}}
\newcommand{\Base}{\mathbb{N}}
\newcommand{\calBR}{\mathcal{BR}}
\newcommand{\nil}{\langle\, \rangle}
\newcommand{\seq}[1]{\langle#1\rangle}
\newcommand{\lh}[1]{\lvert #1 \rvert}
\newcommand{\CC}{\mathrm{\textup{AC}_{0}}}
\newcommand{\BR}{{\mathsf{BR}}}
\DeclareMathOperator{\sg}{\mathsf{sg}}
\newcommand{\AppendZero}[1]{\widehat{#1}}
\newcommand{\CZF}{\mathrm{CZF}}
\newcommand{\HAw}{\mathrm{HA}^{\omega}}
\newcommand{\SystemT}{\mathsf{T}}
\newcommand{\DT}{\mathrm{DT}}
\newcommand{\BT}{\mathrm{BT}}
\newcommand{\Rec}{\mathsf{Rec}}
\newcommand{\Succ}{\mathsf{Succ}}
\newcommand{\Zero}{0}
\newcommand{\Baire}{\Base^{\Base}}
\newcommand{\Cantor}{2^{\Base}}
\newcommand{\At}{\mathsf{At}}
\newcommand{\KE}{\mathsf{KE}}
\newcommand{\lift}[1]{{#1}^{\dagger}}
\newtheorem{theorem}{Theorem}[section]
\newtheorem{proposition}[theorem]{Proposition}
\newtheorem{lemma}[theorem]{Lemma}
\theoremstyle{definition}
\newtheorem{definition}[theorem]{Definition}
\theoremstyle{remark}
\newtheorem{remark}[theorem]{Remark}
\newtheorem*{notations*}{Notations}
\numberwithin{equation}{section}
\title{Representing definable functions of $\HAw$ \\
  by neighbourhood functions}
\author{Tatsuji Kawai}
\affil[]{Japan Advanced Institute of Science and Technology\authorcr
1-1 Asahidai, Nomi, Ishikawa 923-1292, Japan\authorcr
\texttt{tatsuji.kawai@jaist.ac.jp}}
\date{}
\begin{document}
\maketitle

\begin{abstract}
Brouwer (1927) 
claimed that every function from the Baire space to natural
numbers is induced by a neighbourhood function whose domain admits
bar induction. We show that Brouwer's claim is provable in Heyting
arithmetic in all finite types ($\HAw$) for definable functions of
the system. The proof does not rely on elaborate proof theoretic
methods such as normalisation or ordinal analysis. Instead, 
we internalise in $\HAw$ the dialogue tree interpretation of G\"odel's system~$\SystemT$
due to Escard\'o (2013). The interpretation determines 
a syntactic translation of terms,
which yields a neighbourhood function from a closed term of $\HAw$
with the required property. As applications of this result, we prove
some well-known properties of $\HAw$: uniform continuity of definable
functions from $\Base^{\Base}$ to $\Base$ on the Cantor space;
closure under the
rule of bar induction; and closure of bar recursion for the
lowest type with a definable stopping function.
\medskip

\noindent\textsl{Keywords:} Intuitionistic mathematics; Bar induction; Neighbourhood functions;
Dialogue trees\\[.2em]
\noindent\textsl{MSC2010:} 03F55; 03F50; 03F10
\end{abstract}
\section{Introduction}\label{sec:Introduciton}
In ``On the domains of definition of
functions''~\cite{BrouwerDomainsofFunctions}, Brouwer claims that
every function from the Baire space to natural numbers is not only
continuous but also contains a bar for which  so-called \emph{bar
induction} holds.  In terms of the modern constructive mathematics,
Brouwer's claim can be stated as follows, which we refer to as \emph{bar
theorem}:
\begin{quote}
For any function $f \colon \Base^{\Base} \to \Base$, there is a
neighbourhood function $\gamma \colon \Base^{*} \to \Base$ of $f$ such
that its domain $S_{\gamma} := \left\{ a \in \Base^{*} \mid \gamma(a)>
0 \right\}$ satisfies the following induction principle: any
inductive predicate $Q$ on ${\Base^{*}}$ which contains $S_{\gamma}$
necessarily  contains the empty sequence.
\end{quote}
Here, a predicate $Q$ on $\Base^{*}$ is inductive if $\forall
a^{\Base^{*}} \left[ \forall n^{\Base} Q(a * \langle n \rangle) \imp
Q(a) \right]$: if every one-step extension of $a$ satisfies $Q$, then
$a$ satisfies $Q$. A neighbourhood function of $f \colon \Baire \to \Base$ is
an algorithm which tells us whether a given initial segment of an
input $\alpha$ of $f$ is long enough to compute the value $f(\alpha)$;
when the initial segment is not long enough, its value stays at $0$
waiting for more input to be supplied; when it has read enough initial
segment of $\alpha$ to compute the value $f(\alpha)$, it outputs a
positive value $f(\alpha) + 1$.  See Section \ref{sec:Nhb} for the
precise definition.

The purpose of this paper is to show that bar theorem holds for closed
terms of Heyting arithmetic in all finite types $\HAw$ (i.e., closed terms
of G\"odel's system~$\SystemT$).  Specifically, given a closed term $Y
\colon \Baire \to \Base$ of $\HAw$, one can construct a neighbourhood
function of $Y$ as a closed term of $\HAw$ for which bar
induction is valid.  The existing literature suggests that our result is not
surprising: it is known that a closed term $Y \colon \Baire
\to \Base$ of system~$\SystemT$ has a
$\SystemT$-definable modulus of continuity (see e.g., Schwichtenberg
\cite{SchwichtenbergBarRec01}); moreover, $\HAw$ is closed under the
rule of bar induction (Howard \cite[Section 5]{HowardOrdinalAnalysisT}).%
\footnote{The proof by Howard in \cite[Section
5]{HowardOrdinalAnalysisT} applies to those variants of $\HAw$ that
admit G\"odel's Dialectica interpretation into system~$\SystemT$.
On the other hand, we work with the extensional version of $\HAw$,
which does not admit Dialectica interpretation (cf.\
Howard \cite{HowardMajorizable}).}
However, our proof does not rely on sophisticated proof theoretic
methods such as normalisation of infinite terms or ordinal analysis
used in those works. Nor do we use forcing, which is often used to
prove the fan rule, a weaker form of bar induction rule (see Beeson
\cite[Chapter XVI, Section 4]{BeesonFoundationConstMath}).

Instead, our proof of bar theorem is inspired by the dialogue tree model of
system~$\SystemT$ by Escard\'o \cite{EscardoEffectfulForcing} (see
Section \ref{sec:Dialogue}). His main idea is to represent a
$\SystemT$-definable function $f \colon \Baire \to \Base$ by a certain
well-founded tree, called dialogue tree, which can be thought of as a
computation tree of $f$. Since dialogue trees are inductively
defined, one can extract strong continuity properties of
$\SystemT$-definable functions. In this respect, his approach is
similar to the elimination of choice sequences (Kreisel and
Troelstra~\cite[Section 7]{KreiselTroelstra}), where a term containing
a variable for a choice sequence is represented as a Brouwer-operation
(see also Section \ref{sec:BrouwerTree}).
Our proof of bar theorem for $\HAw$ uses a mix of both approaches: the simplicity
of the dialogue model lends itself for direct formalisation in $\HAw$,
while the representation of terms by Brouwer-operations would
immediately yield a proof of bar theorem. We elaborate on how these
ideas can be combined to give a proof of bar theorem for closed terms
of $\HAw$.

The basic idea of our proof is to formalise Escard\'o's model in
$\HAw$. Instead of directly formalising his model, however, we extract
essential properties of the dialogue tree model that is needed for
the representation theorem. By so doing, we define a family
of models for the structure sharing these essential properties
(Section \ref{sec:NonStdRepresentation}). By instantiating this
abstract model with a structure other than dialogue trees, one obtains
a representation theorem of closed terms of $\HAw$ for that
particular structure. In particular, instantiated with
Brouwer-operations, the
model immediately yields a proof of bar theorem in $\HAw$ extended
with the type of Brouwer-operations (Section \ref{sec:BrouwerTree}).
Finally, the use of the transfinite type, that of Brouwer-operation, is
eliminated by reformulating the Brouwer-operation model in terms of
neighbourhood functions.

This last step of the proof, presented in Section
\ref{sec:ProofBarThm}, is inspired by Oliva and Steila
\cite{Oliva_Steila_bar_recursion_closure}, who showed that Spector's
bar recursion for the lowest type is definable in G\"odel's system~$\SystemT$ when its stopping function is $\SystemT$-definable. The
structure and technique used in their proof are similar to those of
ours. However, we believe that our proof is more perspicuous, having
presented its essential structure in a more abstract setting in
Section \ref{sec:NonStdRepresentation}. Moreover, as far as we know,
it is still open whether bar induction follows from bar recursion.  In
this respect, our result is stronger than their result (see Section
\ref{sec:BarRecursion}).

As applications of bar theorem, we prove some well-known properties of
$\HAw$: uniform continuity of definable functions from $\Base^{\Base}$
to $\Base$ on the Cantor space; closure under the rule of bar
induction; and closure of bar recursion for the lowest type with a
definable stopping function.

\begin{remark}
  Our proof of the existence of a $\SystemT$-definable neighbourhood
  function for a closed system-$\SystemT$ term is similar to the proof
  for the existence of a $\SystemT$-definable majorant for a closed
  system-$\SystemT$ term (see Howard \cite{HowardMajorizable} and
  Kohlenbach~\cite[Chapter 6]{KohlenbachAppliedProfTheory}).  As an
  application of existence of majorants,
  Kohlenbach~\cite{KohlenbachPWHereditaryMajorization} gave a simple
  proof of the fan rule for $\HAw$, which yields uniform continuity of
  $\SystemT$-definable functions from $\Base^{\Base}$ to $\Base$ on
  the Cantor space \cite[3.6 Application]{KohlenbachPWHereditaryMajorization}.
  His proof is much simpler than the one presented
  in Section \ref{sec:UCont}, although we derive the result from 
  a stronger result (i.e.\ the bar theorem).
\end{remark}

\paragraph{Organisation}
Section \ref{sec:HAw} fixes the formal system~$\HAw$;
Section~\ref{sec:Nhb} introduces bar theorem for $\HAw$;
Section~\ref{sec:NonStdRepresentation} formalises a family of models
abstracted from Escard\'o's dialogue tree model; Section~\ref{sec:ProofBarThm}
presents the proof of bar theorem for $\HAw$;
Section~\ref{sec:Application} presents applications of bar theorem.

%
%
The paper is essentially self-contained without
Section~\ref{sec:NonStdRepresentation}.  Thus, the reader who is only interested
in the proof of bar theorem and its applications can skip Section~\ref{sec:NonStdRepresentation}
entirely.  However, Section~\ref{sec:NonStdRepresentation} explains
how one can view our proof of bar theorem as an instance of
Escard\'o's dialogue tree model, thereby putting our work in a wider
picture.

\section{Heyting arithmetic in all finite types}\label{sec:HAw}
We work with the extensional version of Heyting arithmetic in all
finite types ($\HAw$) with lambda operators (see Troelstra~\cite[Section 1.8.4]{Troelstra1973}).

Finite types are defined as usual: $\Base$ is a type; if $\sigma,\tau$
are types, so is $\sigma \to \tau$, which is sometimes written
$\tau^{\sigma}$.  For convenience, we assume the existence of type
$\Base^{*}$ of finite sequences of objects of $\Base$, which is
identified with $\Base$ via coding.  We use metavariables
$\rho,\sigma,\tau$ for types.

Terms of $\HAw$ are those of simply typed lambda calculus with
natural number objects: There are demumerable list of variables
$x^{\rho}, y^{\rho}, z^{\rho},\ldots$ for
each type $\rho$, 
the lambda operator $\lambda x^{\rho}$,
and constants
$\Zero$,
$\Succ$, and 
$\Rec_{\rho}$ (for each type $\rho$) of the following types:
\begin{align*}
  \Zero &\colon \Base,
  &
  \Succ &\colon \Base \to \Base,
  &
  \Rec_{\rho} &\colon \rho \to (\Base \to \rho \to \rho) \to \Base \to \rho.
\end{align*}
A context is a finite list $x_{0}^{\rho_{0}}, \dots,
x_{n-1}^{\rho_{n-1}}$ of 
variables, which is sometimes written as $x_{0} \colon {\rho_{0}}, \dots,
x_{n-1} \colon {\rho_{n-1}}$. 
We use $\Gamma,\Delta$ for contexts.
Terms in contexts $\Gamma \vdash t \colon
\rho$ are inductively defined as follows:
\begin{equation}
  \label{def:TermInCont}
\begin{gathered}
  \Gamma, x^\rho, \Delta \vdash x \colon \rho
  \quad
  \Gamma \vdash \mathtt{C}\colon{\rho}
 \quad
  \frac{\Gamma, x^\rho \vdash t \colon \sigma}
  {\Gamma \vdash \lambda x^\rho. t \colon\rho \to \sigma}
  \quad 
  \frac {\Gamma \vdash u\colon{\rho \to \sigma}
  \quad 
  \Gamma \vdash v \colon {\rho}}
  {\Gamma \vdash uv \colon \sigma}
\end{gathered}
\end{equation}
where  $\mathtt{C}\colon {\rho}$ denotes a constant of type $\rho$.
The term $uv$ is sometimes written as $u(v)$.
Closed terms of $\HAw$ are terms in the empty context.

Prime formulas are equations $t =_{\rho} u$ between terms (in the same
context) of the same type $\rho$. Other formulas are built up from prime
formulas using logical constants $\bot$, $\wedge$, $\vee$, $\imp$,
$\forall x^{\rho}$, $\exists x^{\rho}$.
When a formula $A$ is derivable in $\HAw$, we write $\HAw \vdash A$.

\begin{notations*}
We use variables $k, l, m, n, \dots, x, y, z, \dots$ for objects of type $\Base$
and $\alpha, \beta, \gamma, \dots$ for objects of type $\Base \to \Base$.
We assume fixed bijective coding of $\Base^{*}$
 in $\Base$,
and identify finite sequences with their codes.
We use variables $a, b, c,\dots$ for finite sequences.

The empty sequence is denoted by $\nil$, and a singleton sequence is
denoted by $\seq{x^{\Base}}$.
Concatenation of finite sequences $a$ and $b$ is denoted by
$a * b $, and concatenation of finite sequence $a$ and a sequence
$\alpha$ is denoted by $a * \alpha$.  For a finite sequence $a$, its
length is denoted by $|a|$; if $n < |a|$ then $a_{n}$ denotes the
$n$-th entry of $a$.  For any $\alpha$ and $n$, we write
$\overline{\alpha}n$ for the initial segment of $\alpha$ of length
$n$.
We write $\AppendZero{a}$ for $a * (\lambda n. 0)$.
 
If $P$ and $Q$ are predicates on a type $\rho$,
we use abbreviations
\begin{align*}
  P \subseteq Q &\equiv \forall x^{\rho} \left[
  P(x) \imp Q(x) \right], &
  (P \cap Q)(x) &\equiv P(x) \wedge Q(x).
\end{align*}
Type superscripts $t^{\rho}$ and subscripts $=_{\rho}$ are omitted
whenever they can be inferred from the context.
\end{notations*}

\section{Bar theorem for closed terms of $\HAw$}\label{sec:Nhb}
We recall some technical notions that
are needed for our main theorem; see Troelstra and van Dalen
\cite[Chapter 4]{ConstMathI} for more details.
\begin{definition}\label{def:NbhFunc}
  A function $\gamma \colon
  \Base^{*} \to \Base$ is called a \emph{neighbourhood function} if
  \begin{enumerate}
        \item $\forall \alpha^{\Baire} \exists n^{\Base}
          \gamma(\overline{\alpha}n) > 0$,
        \item $\forall a^{\Base^{*}}
          \left[ \gamma(a) > 0 \imp \forall b^{\Base^{*}} \gamma(a) =
          \gamma(a*b)\right]$.
  \end{enumerate}
  Given a neighbourhood function $\gamma \colon \Base^{*} \to \Base$ and a
  function $f \colon \Baire  \to \Base$, we say that $\gamma$
  \emph{induces} $f$ if
  \[
    \forall \alpha^{\Baire} \forall n^{\Base} \left
    [ \gamma(\overline{\alpha}n) > 0 \imp
    f(\alpha) = \gamma(\overline{\alpha}n) \dotminus 1\right],
  \]
  where $\dotminus$ is the primitive recursive cut-off minus operation.
  We say that a function $f \colon \Baire \to \Base$
  \emph{has a neighbourhood function} if there exists a neighbourhood
  function $\gamma \colon \Base^{*} \to \Base$ which induces $f$.
  In this case, we also say that $\gamma$ is a neighbourhood function of
  $f$. 
\end{definition}
Note that a function $f \colon \Baire \to \Base$ may have many
different neighbourhood functions.  Moreover, if $f$ has a
neighbourhood function, then $f$ is continuous. 

\begin{definition}
  A predicate $P$ on  $\Base^{*}$ is 
  \begin{itemize}
    \item a \emph{bar} if $\forall \alpha^{\Baire}
      \exists n^{\Base} P(\overline{\alpha}n)$;
    \item \emph{decidable} if $\forall a^{\Base^{*}}
      \left[  P(a) \vee \neg P(a)\right]$;
    \item \emph{monotone} if 
      $\forall a^{\Base^{*}} \forall b^{\Base^{*}} \left[ P(a) \imp
      P(a*b)\right]$.
  \end{itemize}
  A neighbourhood function $\gamma \colon \Base^{*} \to \Base$
  determines a decidable monotone bar $S_{\gamma}$ by
  \begin{equation}\label{eq:USecSeq}
    S_{\gamma}(a) \equiv \gamma(a) > 0.
  \end{equation}
  We say that $\gamma$
  satisfies \emph{bar induction} if for any predicate $Q$ on
  $\Base^{*}$
  \begin{equation}\label{eq:BI}
    S_{\gamma} \subseteq Q \wedge \Ind(Q)\imp Q(\nil),
  \end{equation}
  where 
  \begin{equation*}
    \Ind(Q) \equiv \forall a^{\Base^{*}} \left[ \forall n^{\Base}
    Q(a * \langle n \rangle) \imp Q(a) \right].
  \end{equation*}
  A predicate $Q$ on $\Base^{*}$ for which $\Ind(Q)$ holds is said to
  be \emph{inductive}.
\end{definition}

\begin{lemma} \label{lem:inductive}
  Let $Q$ be a predicate on $\Base^{*}$. Then
  \[
    \forall k^{\Base} \forall a^{\Base^{*}}
    \left[ \Ind(Q) \wedge \forall b^{\Base^{*}}
      \left( \lh{b}=k \imp Q(a*b)  \right)
      \imp Q(a)\right].
  \]
\end{lemma}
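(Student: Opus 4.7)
The plan is to prove the claim by induction on $k$, with $a$ and $Q$ treated as parameters (though $a$ must vary in the induction, since the inductive step will apply the hypothesis at extensions of the original $a$).

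For the base case $k = 0$, the only finite sequence of length $0$ is $\nil$, so the assumption $\forall b\,[\lh{b}=0 \imp Q(a*b)]$ reduces to $Q(a*\nil)$, which is $Q(a)$.

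For the inductive step, assume the statement holds for $k$ and all $a$, and fix $a$ with $\Ind(Q)$ and $\forall b\,[\lh{b}=k+1 \imp Q(a*b)]$. By $\Ind(Q)$ applied at $a$, it suffices to establish $\forall n\, Q(a*\seq{n})$. Fix $n$; by the inductive hypothesis instantiated at the sequence $a*\seq{n}$, it suffices to show $\forall b'\,[\lh{b'}=k \imp Q((a*\seq{n})*b')]$. Given such a $b'$, set $b \defeql \seq{n}*b'$; then $\lh{b} = k+1$, and associativity of concatenation gives $(a*\seq{n})*b' = a*b$. Hence $Q((a*\seq{n})*b')$ follows from the main hypothesis, which completes the induction.

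I do not anticipate any real obstacle here; the only point to watch is that the induction on $k$ must be over the universally quantified form $\forall a\,[\cdots]$ so that the inductive hypothesis is available at the shifted sequence $a*\seq{n}$. The argument uses nothing beyond associativity of $*$, the definition of $\Ind(Q)$, and primitive recursion on $k$, all of which are routinely available in $\HAw$.
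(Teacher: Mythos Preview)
Your proof is correct and is precisely the ``straightforward induction on $k$'' that the paper indicates; the paper does not spell out the details, but the argument you give---including the need to quantify over $a$ so the induction hypothesis applies at $a*\seq{n}$---is exactly what is intended.
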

\begin{proof}
  By a straightforward induction on $k$.
\end{proof}

We can now introduce our main theorem.
\begin{theorem}[Bar theorem]\label{thm:BarCont}
  For any closed term $Y \colon \Baire \to \Base$ of
  $\HAw$, there is a closed term $\gamma \colon \Base^{*} \to \Base$
  of $\HAw$
  such that 
  \begin{enumerate}
  \item  $\HAw \vdash \text{$\gamma$ is a neighbourhood function of
    $Y$}$,
  \item for any predicate $Q$ on ${\Base^{*}}$
    \begin{equation*}
      \HAw \vdash S_{\gamma} \subseteq Q \wedge \Ind(Q)\imp Q(\nil).
    \end{equation*}
  \end{enumerate}
\end{theorem}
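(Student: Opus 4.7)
The plan follows the outline suggested in the introduction: first internalize Escard\'o's dialogue tree interpretation in $\HAw$ extended with a transfinite type of Brouwer operations, where the proof is conceptually simple, and then eliminate the transfinite type by replacing Brouwer operations with neighbourhood functions while tracking bar induction as an inductive invariant. Concretely, I would introduce a type-directed syntactic translation $\rho \mapsto \lift{\rho}$ of types of $\HAw$, setting $\lift{\Base}$ to be the type $\Base^{*} \to \Base$ of candidate neighbourhood functions and $\lift{\rho \to \sigma} := \lift{\rho} \to \lift{\sigma}$, and correspondingly translating each term $t \colon \rho$ into a term $\lift{t} \colon \lift{\rho}$. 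The translation of the constants is suggested by dialogue semantics: $\lift{\Zero}$ outputs the constant value $1$ (encoding $0$); $\lift{\Succ}$ increments the positive output of a neighbourhood function; and $\lift{\Rec_\rho}$ is defined recursively using a bind-like operation on neighbourhood functions, analogous to the construction Oliva and Steila \cite{Oliva_Steila_bar_recursion_closure} use to define Spector bar recursion at the lowest type inside system~$\SystemT$. A distinguished generic input $\Generic \colon \lift{\Baire}$ encodes the act of reading positions of the argument, and $\gamma := \lift{Y}\,\Generic$ is the candidate neighbourhood function.

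I would then verify, by induction on term structure, two properties simultaneously: (i) $\lift{t}$ correctly represents $t$, so that applying $\lift{Y}$ to $\Generic$ produces a neighbourhood function inducing $Y$ in the sense of Definition~\ref{def:NbhFunc}; and (ii) the bar determined by the neighbourhood function associated with every subterm admits bar induction, i.e.\ for any inductive predicate $Q$ containing that bar one has $Q(\nil)$. Base and structural cases ($\Zero$, $\Succ$, variables, application, abstraction) reduce to Lemma~\ref{lem:inductive} and to the compositional behaviour of neighbourhood functions under the bind-like operation; the recursor case is handled by an outer induction on its numerical argument combined with the inductive hypothesis on the recursive branch.

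The main obstacle is the recursor $\Rec_\rho$. In the Brouwer-operation model, primitive recursion produces a manifestly well-founded tree, but once the transfinite tree type is replaced by the finite type $\Base^{*} \to \Base$, well-foundedness must be tracked by the explicit invariant in (ii), and the bind-like operation defining $\lift{\Rec_\rho}$ must be shown to preserve this invariant at arbitrary type. This is precisely what Section~\ref{sec:NonStdRepresentation} is designed to achieve: by isolating the structural properties that Escard\'o's interpretation actually requires of its underlying datatype, one can specialize the abstract representation theorem to neighbourhood functions and derive both conclusions of Theorem~\ref{thm:BarCont} uniformly; the concrete realization is carried out in Section~\ref{sec:ProofBarThm}.
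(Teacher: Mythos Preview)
Your proposal is correct and follows essentially the same route as the paper: the type translation $\lift{\Base}:=\Base^{*}\to\Base$, the term translation of the constants via a bind-like $\KE$, the generic element, and the definition $\gamma:=\lift{Y}\,\Generic$ all coincide with Section~\ref{sec:ProofBarThm}, and your inductive verification of (i) and (ii) corresponds to the two logical relations $\approx_{\rho}$ and $P^{Q}_{\rho}$ of Lemmas~\ref{lem:RelTranslation} and~\ref{lem:BarProp}. The only cosmetic difference is that the paper treats (i) and (ii) with two separate type-indexed predicates rather than simultaneously, and makes explicit that at higher types the invariant is ``preserves the base-type invariant under application'' (your phrase ``the neighbourhood function associated with every subterm'' should be read in this relational sense).
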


The proof of Theorem \ref{thm:BarCont} is given in Section
\ref{sec:ProofBarThm}.

\section{Non-standard representation of terms of $\HAw$}
\label{sec:NonStdRepresentation}
Escard\'o \cite{EscardoEffectfulForcing} showed that every definable
function $Y \colon \Baire \to \Base$ of G\"odel's system~$\SystemT$
can be represented by a dialogue tree.
Escard\'o presented his result as a property of $\SystemT$-definable
function in the set-theoretical model of system~$\SystemT$. 
Here, we formalise his result in $\HAw$, but we abstract
away from the concrete model of dialogue trees.

\begin{proposition}\label{prop:MonadicFramework}
Suppose that we have a type $T\Base$ and closed terms
\begin{align*}
  \eta &: \Base \to T\Base,\\
  \KE_{\Base} &: (\Base \to T\Base) \to (T\Base \to T\Base),\\
  \At &: \Base \to T\Base,\\
  \_ \bullet \_ &: T\Base \to \Baire \to \Base,
\end{align*}
which satisfy the following equations:
\begin{align}
  \label{Axiom:eta}
  \eta(n) \bullet \alpha &= n,\\
  \label{Axiom:KE}
  f(\gamma \bullet \alpha) \bullet \alpha
  &= \KE_{\Base} (f)(\gamma) \bullet \alpha,\\
  \label{Axiom:At}
  \At(n) \bullet \alpha &= \alpha (n).
\end{align}
Then, for each closed term $Y \colon \Baire \to \Base$ of
$\HAw$, there exists a closed term $\gamma \colon T\Base$ such that
$
\forall \alpha^{\Baire} \gamma \mathop{\bullet} \alpha =
Y\alpha.
$
\end{proposition}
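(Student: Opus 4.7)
The plan is to internalise in $\HAw$ Escard\'o's dialogue translation of system~$\SystemT$, using the abstract operators supplied by the hypothesis in place of the concrete dialogue-tree operations. Concretely, I will define a type translation $\sigma \mapsto \lift{\sigma}$ and a compositional term translation $t \mapsto \lift{t}$ (sending a term of type $\sigma$ to a term of type $\lift{\sigma}$), and then take the witness to be $\gamma := \lift{Y}(\KE_{\Base}(\At))$.

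The type translation is $\lift{\Base} := T\Base$ and $\lift{(\sigma \to \tau)} := \lift{\sigma} \to \lift{\tau}$. To translate the recursor it is convenient to first extend the Kleisli extension from $\Base$ to every finite type by defining, by induction on $\rho$, a closed term $\KE_{\rho} \colon (\Base \to \lift{\rho}) \to T\Base \to \lift{\rho}$, with $\KE_{\Base}$ the given constant and
\[
  \KE_{\sigma \to \tau}(f)(\gamma)(x) := \KE_{\tau}(\lambda n.\, f(n)(x))(\gamma).
\]
The constants are then translated by
\begin{align*}
  \lift{0} &:= \eta(0), \\
  \lift{\Succ} &:= \KE_{\Base}\bigl(\lambda n.\, \eta(\Succ\, n)\bigr), \\
  \lift{(\Rec_{\rho})}(a)(g)(\gamma) &:= \KE_{\rho}\bigl(\lambda n.\, \Rec_{\lift{\rho}}(a)(\lambda m.\, g(\eta(m)))(n)\bigr)(\gamma),
\end{align*}
while variables, lambda abstractions and applications translate compositionally.

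The correctness of the translation is captured by a family of logical relations $R_{\sigma}^{\alpha}$ between $\HAw$-terms of type $\sigma$ and $\HAw$-terms of type $\lift{\sigma}$, defined by recursion on $\sigma$:
\begin{align*}
  R_{\Base}^{\alpha}(n, \gamma) &\defeqiv \gamma \bullet \alpha = n, \\
  R_{\sigma \to \tau}^{\alpha}(f, g) &\defeqiv \forall x \forall x'\, \bigl(R_{\sigma}^{\alpha}(x, x') \imp R_{\tau}^{\alpha}(f(x), g(x'))\bigr).
\end{align*}
The key step is the \emph{fundamental lemma}: if $\Gamma \vdash t \colon \rho$ and terms $\vec u, \vec v$ satisfy $R_{\sigma_{i}}^{\alpha}(u_{i}, v_{i})$ for each variable $x_{i} \colon \sigma_{i}$ of $\Gamma$, then $R_{\rho}^{\alpha}(t[\vec x := \vec u], \lift{t}[\vec x := \vec v])$. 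I would prove this by meta-induction on the structure of $t$; the cases for variable, abstraction and application are routine from the definition of $R$, while the cases for $0$ and $\Succ$ follow directly from axioms~\eqref{Axiom:eta} and~\eqref{Axiom:KE}.

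The main obstacle is the recursor case, for which I would establish two auxiliary properties of $\KE_{\rho}$, both by induction on $\rho$: first, a generalised form of axiom~\eqref{Axiom:KE} stating that $\KE_{\rho}$ satisfies the appropriate Kleisli-extension equation at type $\rho$; second, that $\KE_{\rho}$ preserves the logical relation, i.e.\ $R_{\Base \to \rho}^{\alpha}(h, k)$ together with $R_{\Base}^{\alpha}(n, \gamma)$ implies $R_{\rho}^{\alpha}(h(n), \KE_{\rho}(k)(\gamma))$. Given these, a straightforward internal induction on the natural-number argument of the recursor handles $\Rec_{\rho}$. Finally, applying the fundamental lemma to the closed term $Y$ yields $R_{\Baire \to \Base}^{\alpha}(Y, \lift{Y})$; combined with $R_{\Base \to \Base}^{\alpha}(\alpha, \KE_{\Base}(\At))$, which is immediate from axioms~\eqref{Axiom:KE} and~\eqref{Axiom:At}, we obtain $Y(\alpha) = \lift{Y}(\KE_{\Base}(\At)) \bullet \alpha$ for every $\alpha$, as required.
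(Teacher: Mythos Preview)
Your proposal is correct and follows essentially the same architecture as the paper: the same type and term translations, the same extension of $\KE$ to higher types, a lifting lemma for $\KE_\rho$, a fundamental lemma by induction on terms, and the same witness $\gamma = \lift{Y}(\KE_{\Base}(\At))$ (the paper writes this as $(Y\Omega)^{\dagger}$ after introducing an indeterminate $\Omega$ with $\lift{\Omega} := \KE_{\Base}(\At)$). The one real variation is the logical relation: the paper's $\sim_\rho$ relates $\lift{\rho}$ with $\Baire \to \rho$ uniformly in $\alpha$, using Kleisli application $\lambda\alpha.\,F\alpha(y\alpha)$ at function types, whereas your $R_\sigma^\alpha$ fixes $\alpha$ and relates $\sigma$ with $\lift{\sigma}$ directly. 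Both formulations work; yours is slightly lighter notationally, while the paper's makes the Kleisli structure explicit.

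One slip to fix: in your lifting lemma the hypothesis $R_{\Base \to \rho}^{\alpha}(h,k)$ forces $k \colon T\Base \to \lift{\rho}$, but $\KE_{\rho}$ needs an argument of type $\Base \to \lift{\rho}$. The intended hypothesis is $\forall m^{\Base}\, R_{\rho}^{\alpha}(h(m), k(m))$ with $k \colon \Base \to \lift{\rho}$, which is precisely the shape of the paper's lifting lemma for $\KE_{\rho}$. With that correction in place, your separate ``generalised form of axiom~\eqref{Axiom:KE} at type $\rho$'' is not needed; the relational lifting property already carries the induction.
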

The term $\_\bullet\_$ allows us to regard an object of type
$T\Base$ as a function from $\Baire$ to $\Base$. Thus, the proposition says that every
closed
$\HAw$-term of type $\Baire \to \Base$ can be represented by an object
of type $T\Base$. Intuitively, a term of type $T\Base$ derived from a
closed term $Y \colon \Baire \to \Base$ tells us more
about the computation of~$Y$.

\subsection{Proof of Proposition \ref{prop:MonadicFramework}}
Let $\mathcal{T}_{\Omega}$ be the set of terms in contexts in an
indeterminate $\Omega \colon {\Base \to \Base}$, i.e.,
$\mathcal{T}_{\Omega}$ is defined by the rule described in
\eqref{def:TermInCont} but with an extra constant $\Omega \colon {\Base \to
\Base}$. A term in indeterminate $\Omega$ will be written as
$t[\Omega]$. We define two interpretations of $\mathcal{T}_{\Omega}$
in $\HAw$: one is a standard interpretation; the
other is a non-standard interpretation into the type structure over $T\Base$.

\subsubsection{Standard interpretation}
In the standard interpretation, a term in context 
$\Gamma \vdash t[\Omega]\colon{\rho} \in \mathcal{T}_{\Omega}$ is interpreted as $\Gamma
\vdash \lambda \alpha. t[\alpha/\Omega] \colon \Baire \to \rho$, where
$t[\alpha/\Omega]$ is a substitution of $\alpha$ for $\Omega$ in $t$. 

\begin{remark}\label{rem:Monad}
The interpretation is \emph{standard} in the following sense: from
the viewpoint of categorical logic \cite{PittsCategoricalLogic}, the
interpretation determines an equivalence between the two categories:
the category $\Con[\Omega]$ of contexts and terms in indeterminate
$\Omega$; and the Kleisli category of a monad $T_{\Baire}$ on the
category $\Con$ of contexts and terms in the original language, where
$T_{\Baire}$ is defined as follows:
\[
  \begin{aligned}
    T_{\Baire}(x_{0} \colon {\rho_{0}}, \dots, x_{n-1} \colon {\rho_{n-1}}) 
    &:=
    x_{0}' \colon {\rho_{0}^{\Baire}}, \dots, x_{n-1}' \colon
    {\rho_{n-1}^{\Baire}} \\
    T_{\Baire}(\Gamma \vdash t\colon{\rho}) 
    &:=
    T_{\Baire}(\Gamma) \vdash \lambda \alpha. t[x_{0}'(\alpha)/x_{0},\dots,
    x_{n-1}'(\alpha)/x_{n-1}] \colon \rho^{\Baire}
  \end{aligned}
\]
See Lambek \cite[Section 5]{LambekFunctionalCompletenss}.  Thus, the
standard interpretation transforms a term in context in indeterminate
$\Omega$ into an essentially equivalent representation expressed in
the original language.
\end{remark}

\subsubsection{Non-standard interpretation}
\label{sec:NonStdInterpretation}
Let $(T\Base, \eta, \KE_{\Base}, \At, \_\bullet\_)$ be the structure specified 
in Proposition \ref{prop:MonadicFramework}.
We define a translation $\rho \mapsto \lift{\rho}$ of
the standard type structure over $\Base$ into the non-standard type
structure over $T\Base$ as follows:
\begin{align*}
  \lift{\Base} &:=  T\Base
  \\
  \lift{(\rho \to \sigma)} &:= \lift{\rho} \to \lift{\sigma}
\end{align*}
Each context $\Gamma \equiv x_{0} \colon
\rho_{0},\dots, x_{n-1} \colon \rho_{n-1}$ is translated to
a context
$\lift{\Gamma} \equiv \lift{x_{0}} \colon
\lift{\rho_{0}},\dots, \lift{x_{n-1}} \colon \lift{\rho_{n-1}}$,
where we assume a fixed assignment $\lift{x} \colon \lift{\rho}$ of
a variable to each variable $x \colon \rho$.
Then, a term in context $\Gamma \vdash t[\Omega] \colon \rho$ 
is translated to a term in context $\lift{\Gamma} \vdash
\lift{t[\Omega]} \colon \lift{\rho}$ of $\HAw$ as follows:
\begin{align*}
  \lift{( \Gamma, x \colon \rho, \Delta \vdash x \colon \rho )}
  &:=  \lift{\Gamma}, \lift{x} \colon \lift{\rho},
  \lift{\Delta} \vdash \lift{x} \colon \lift{\rho} \\
  \lift{(\Gamma \vdash \mathtt{C}\colon\rho )}
  &:=  \lift{\Gamma} \vdash \lift{\mathtt{C}} \colon \lift{\rho} \\
  \lift{(\Gamma \vdash \lambda x^{\rho}. t^{\sigma} \colon \rho \to
\sigma)}
&:=  \lift{\Gamma} \vdash \lambda \lift{x}. \lift{t} \colon \lift{\rho}
\to \lift{\sigma} \\
\lift{(\Gamma \vdash u^{\rho \to \sigma}v^{\rho}\colon \sigma)}
&:= \lift{\Gamma} \vdash \lift{u} \lift{v} \colon \lift{\sigma}
\end{align*}
Here, each constant $\mathtt{C}$ is translated as follows:
\begin{align*}
  \lift{0} &:= \eta(0) \\
  \lift{\Succ} &:= \KE_{\Base}(\lambda x^{\Base}. \eta(\Succ(x)))\\
  \lift{\Omega} &:= \gen := \KE_{\Base} (\lambda x^{\Base}. \At(x))\\
  \lift{\Rec_{\rho}} &:= \lambda u^{\rho^{\dagger}}. \lambda
  F^{\lift{\Base} \to \lift{\rho} \to \lift{\rho}}.
  \KE_{\rho}(\Rec_{\lift{\rho}}(u, \lambda x^{\Base}. F(\eta(x))))
\end{align*}
where for higher types, we define
\[
  \KE_{\rho \to \sigma} := \lambda f^{\Base \to \lift{\rho} \to
  \lift{\sigma}}. \lambda u^{\lift{\Base}}. \lambda v^{\lift{\rho}}.
  \KE_{\sigma}(\lambda x^{\Base}. fxv)u.
\]

\subsubsection{Relating two interpretations}
We relate two interpretations by a logical relation.  Define a binary
predicate $\sim_{\rho}$ on $\rho^{\dagger}$ and $\Baire \to \rho$ by
induction on types:
\begin{equation}
  \label{def:LogicalRel}
  \begin{aligned}
    \gamma \sim_{\Base} f &\equiv
    \forall \alpha^{\Baire} \left[\gamma \mathop{\bullet} \alpha =
    f\alpha  \right],\\
    G \sim_{\rho \to \sigma} F
    &\equiv \forall x^{\rho^{\dagger}}\forall y^{\Baire \to
    \rho}  
    \left[
      x \sim_{\rho}  y \imp Gx \sim_{\sigma}  \lambda \alpha. F
    \alpha (y \alpha)  \right].
  \end{aligned}
\end{equation}
At the base type, $\gamma \sim_{\Base} f $ means that $\gamma$
represents $f$. At higher types, the definition of $\sim_{\rho \to
\sigma}$ requires application to respect the relation $\sim$:
on the left, $Gx$ is the application in 
the type structure over $T\Base$; on the right,
$\lambda \alpha. F \alpha (y \alpha)$ corresponds to the application 
in the Kleisli category of the monad $T_{\Baire}$ (see Remark
\ref{rem:Monad}).%
\footnote{If we regard $y^{\Baire \to \rho}$ and
$F^{\Baire \to (\rho \to \sigma)}$ as global points
in the Kleisli category of $T_{\Baire}$, then $\lambda \alpha. F \alpha (y \alpha)$
is obtained by composing $\seq{F,y}$ with the term 
$u\colon \sigma^{\rho}, v \colon \rho \vdash\lambda \alpha.
uv \colon \Baire \to \sigma$, which represents an evaluation morphism.}

\begin{lemma}\label{prop:LogicalRelation}
  For any term $\Gamma \vdash t[\Omega]:{\rho}$
  in indeterminate $\Omega$ and context
  $\Gamma \equiv x_{0}^{\rho_{0}}, \dots, x_{n-1}^{\rho_{n-1}}$,
  \begin{gather}
    \notag
  \begin{multlined}
    \HAw \vdash
    \forall u_{0}^{\rho_{0}^{\dagger}} \cdots \forall u_{n-1}^{\rho_{n-1}^{\dagger}}
    \forall y_{0}^{\Baire \to \rho_{0}} \cdots
    \forall y_{n-1}^{\Baire \to\rho_{n-1}}
    \Bigl[
    u_{0} \sim_{\rho_{0}} y_{0} \wedge \cdots 
    \wedge u_{n-1} \sim_{\rho_{n-1}} y_{n-1} \\
    \imp
     t^{\dagger}[\overline{u}/\lift{\overline{x}}] 
    \sim_{\rho}
    \lambda \alpha^{\Baire}.  t[\alpha/\Omega,\overline{y}(\alpha)/\overline{x}]
  \Bigr]
  \end{multlined}\\
  \shortintertext{where}
  \label{eq:substitution}
  \begin{aligned}
    t^{\dagger}[\overline{u}/\lift{\overline{x}}] 
    &\equiv
    t^{\dagger}[u_{0}/\lift{x}_{0},\dots,u_{n-1}/\lift{x}_{n-1}],\\
    t[\alpha/\Omega,\overline{y}(\alpha)/\overline{x}]
    &\equiv
    t[\alpha/\Omega,y_{0}(\alpha)/x_{0},\ldots,y_{n-1}(\alpha)/x_{n-1}].
  \end{aligned}
  \end{gather}
\end{lemma}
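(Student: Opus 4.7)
The plan is to argue by induction on the derivation of $\Gamma \vdash t[\Omega] : \rho$ as given in \eqref{def:TermInCont}, i.e.\ the standard fundamental lemma of logical relations, with the extensional equality of $\HAw$ supplying the usual $\eta$-conversions. In the variable case $t \equiv x_i$, the translation gives $t^{\dagger}[\overline{u}/\lift{\overline{x}}] = u_i$, while the substitution on the right simplifies to $\lambda\alpha. y_i(\alpha) = y_i$, so the conclusion reduces to the assumption $u_i \sim_{\rho_i} y_i$. The application case unfolds the definition of $\sim_{\rho \to \sigma}$ applied to the two induction hypotheses, and the abstraction case applies the induction hypothesis in the extended context, using $\beta$-conversion to match the required shape. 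Both are routine.

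The interesting content is concentrated in the constant cases, which must be verified directly from the axioms \eqref{Axiom:eta}, \eqref{Axiom:KE}, \eqref{Axiom:At}. For $0$ the claim $\eta(0) \sim_{\Base} \lambda\alpha.0$ is immediate from \eqref{Axiom:eta}. For $\Succ$, given $u \sim_{\Base} y$, axiom \eqref{Axiom:KE} reduces $\KE_{\Base}(\lambda x.\eta(\Succ(x)))(u) \bullet \alpha$ to $\eta(\Succ(u \bullet \alpha)) \bullet \alpha$, which collapses to $\Succ(y\alpha)$ via \eqref{Axiom:eta} and the hypothesis. The case of $\Omega$ is analogous but invokes \eqref{Axiom:At} in place of \eqref{Axiom:eta}: this is precisely why $\gen$ is built with $\At$, ensuring $\gen(u) \bullet \alpha = \alpha(y\alpha)$, which matches the standard interpretation $\lambda\alpha.\alpha$ of the indeterminate.

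The main obstacle is the case of $\Rec_{\rho}$, which I would handle by first isolating an auxiliary lemma capturing the behaviour of $\KE_{\rho}$ at arbitrary type: if $h : \Base \to \lift{\rho}$ and $G : \Base \to \Baire \to \rho$ satisfy $\forall n.\ h(n) \sim_{\rho} \lambda\alpha. G(n)(\alpha)$, and $u \sim_{\Base} y$, then $\KE_{\rho}(h)(u) \sim_{\rho} \lambda\alpha. G(y\alpha)(\alpha)$. This is proved by induction on $\rho$: the base case follows from \eqref{Axiom:KE} applied to $h(u \bullet \alpha)$ combined with instantiation of the hypothesis at $n = u \bullet \alpha$; the step case at $\rho \to \sigma$ unfolds the definition of $\KE_{\rho \to \sigma}$, introduces a new argument $v \sim_{\rho} w$, and invokes the IH at $\sigma$ with a reindexed $G'(n)(\alpha) := G(n)(\alpha)(w\alpha)$. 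The care needed here is in choosing $G'$ so that the IH hypothesis $h(n)(v) \sim_{\sigma} \lambda\alpha. G'(n)(\alpha)$ is derivable from unfolding $\sim_{\rho \to \sigma}$ applied to the original hypothesis.

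With this lemma in hand, the recursor case reduces to showing, by induction on $n$, that $\Rec_{\lift{\rho}}(u_0,\lambda x. U(\eta(x)))(n) \sim_{\rho} \lambda\alpha. \Rec_{\rho}(y_0 \alpha)(Y\alpha)(n)$ whenever $u_0 \sim_{\rho} y_0$ and $U \sim_{\Base \to \rho \to \rho} Y$: the base case is exactly $u_0 \sim_{\rho} y_0$, while the successor step combines the IH with $\eta(m) \sim_{\Base} \lambda\alpha. m$ (by \eqref{Axiom:eta}) and two unfoldings of $\sim$ at function types. Applying the auxiliary $\KE_{\rho}$-lemma with $u := u_1$ and $y := y_1$ then gives $\lift{\Rec_{\rho}}\,u_0\,U\,u_1 \sim_{\rho} \lambda\alpha.\Rec_{\rho}(y_0\alpha)(Y\alpha)(y_1\alpha)$, closing the constant case and with it the main induction.
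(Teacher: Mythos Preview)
Your proposal is correct and follows essentially the same route as the paper. The auxiliary lemma you isolate for $\KE_{\rho}$ is precisely the paper's Lemma~\ref{lem:TranslationKE}; the only cosmetic difference is that the paper states it up front and applies it uniformly to the $\Succ$, $\Omega$, and $\Rec_{\rho}$ cases, whereas you inline its base case directly for $\Succ$ and $\Omega$ and only invoke the general lemma for $\Rec_{\rho}$.
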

The proof of Lemma \ref{prop:LogicalRelation} relies on the
following lifting property.

\begin{lemma}
  \label{lem:TranslationKE}
  For each type $\rho$,
  \begin{multline*}
   \HAw \vdash \forall g^{\Base \to \rho^{\dagger}} \forall f^{\Base \to
    \Baire \to \rho}
    \left[ \forall n^{\Base} g(n) \sim_{\rho} f(n) 
      \imp
       \KE_{\rho}(g)\sim_{\Base \to \rho} \lambda \alpha.
       \lambda n. fn\alpha \right].
  \end{multline*}
\end{lemma}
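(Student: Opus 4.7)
The plan is to prove the lemma by induction on the type $\rho$, using axiom \eqref{Axiom:KE} for the base case and the defining equation $\KE_{\sigma \to \tau} = \lambda f. \lambda u. \lambda v. \KE_{\sigma}(\lambda x. fxv)u$ to reduce the inductive case to a smaller type, via an appropriately chosen instance of the inductive hypothesis.

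For the base case $\rho = \Base$, unfold the statement to be proven: assuming $\forall n\, g(n) \sim_{\Base} f(n)$ and taking any $x \sim_{\Base} y$ (i.e.\ $\forall \alpha\, x \bullet \alpha = y\alpha$), I would have to show $\forall \alpha\, \KE_{\Base}(g)(x) \bullet \alpha = f(y\alpha)\alpha$. By axiom \eqref{Axiom:KE}, the left-hand side equals $g(x \bullet \alpha) \bullet \alpha$, which by the hypothesis on $x, y$ equals $g(y\alpha) \bullet \alpha$, and then the hypothesis $g(n) \sim_{\Base} f(n)$ at $n := y\alpha$ converts this into $f(y\alpha)\alpha$.

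For the inductive step with $\rho = \sigma \to \tau$, unfold $\sim_{\sigma \to \tau}$: I would fix $x \sim_{\Base} y$ and $u \sim_{\sigma} w$, and aim to show $\KE_{\sigma \to \tau}(g)(x)(u) \sim_{\tau} \lambda \alpha.\, f(y\alpha)\alpha(w\alpha)$. By the definition of $\KE_{\sigma \to \tau}$ the left-hand side $\beta$-reduces to $\KE_{\tau}(\lambda n.\, g(n)u)(x)$. The key move is to set $g'(n) := g(n)u$ and $f'(n) := \lambda \alpha.\, f(n)\alpha(w\alpha)$; from $g(n) \sim_{\sigma \to \tau} f(n)$ together with $u \sim_{\sigma} w$ one obtains $g'(n) \sim_{\tau} f'(n)$ by unfolding $\sim_{\sigma \to \tau}$. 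The inductive hypothesis at type $\tau$ then yields $\KE_{\tau}(g') \sim_{\Base \to \tau} \lambda \alpha.\, \lambda n.\, f'(n)\alpha$, and specialising this to the pair $x \sim_{\Base} y$ gives exactly $\KE_{\tau}(g')(x) \sim_{\tau} \lambda \alpha.\, f'(y\alpha)\alpha$, which is the desired conclusion after a $\beta$-reduction.

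The conceptual content is routine once the right $g'$ and $f'$ are chosen; the main obstacle I expect is purely bookkeeping, namely keeping track of the $\beta$-reductions and the binding of $\alpha$, $n$, $y\alpha$, $w\alpha$ cleanly enough that the application of the inductive hypothesis lines up syntactically with the goal. One should also take care that all translations and substitutions respect the extensional equality of $\HAw$, which is where the axioms \eqref{Axiom:eta}--\eqref{Axiom:At} (in particular \eqref{Axiom:KE}) enter as the only non-syntactic inputs to the argument.
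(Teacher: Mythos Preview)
Your proposal is correct and follows essentially the same route as the paper: induction on $\rho$, with the base case discharged by axiom~\eqref{Axiom:KE} and the inductive step handled by unfolding $\KE_{\sigma\to\tau}$, setting $g'(n)=g(n)u$ and $f'(n)=\lambda\alpha.\,f(n)\alpha(w\alpha)$, and invoking the induction hypothesis at type $\tau$. The only differences are cosmetic (variable names and the order in which you rewrite $x\bullet\alpha$ to $y\alpha$ versus applying $g(n)\sim_{\Base}f(n)$ in the base case).
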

\begin{proof}
  By induction on types.
  \medskip

  \noindent$\rho \equiv \Base$:
  Fix $g^{\Base \to \lift{\Base}}$ and $f^{\Base \to \Baire \to \Base}$, and suppose 
  that
  $\forall n^{\Base} g(n) \sim_{\Base} f(n)$.
  Let $\gamma^{\Base^{\dagger}}$ and $h^{\Baire \to \Base}$
  satisfy $\gamma \sim_{\Base} h$. Then, for any $\alpha$,
  \begin{align*}
    \KE_{\Base}(g)(\gamma) \bullet \alpha
    &=  g(\gamma \bullet \alpha) \bullet \alpha && \text{by
     \eqref{Axiom:KE}}\\
    &= f(\gamma \bullet \alpha)(\alpha)
    && \text{by $g(\gamma \bullet \alpha) \sim_{\Base} f(\gamma \bullet \alpha)$}
    \\
    &= f(h\alpha)\alpha
    && \text{by $\gamma \sim_{\Base} h$}
    \\
    &= (\lambda n. fn\alpha) (h\alpha).
  \end{align*}
  Thus $\KE_{\Base}(g)(\gamma) \sim_{\Base} \lambda \alpha. (\lambda
  n. fn\alpha) (h\alpha)$. Hence $\KE_{\Base}(g) \sim_{\Base \to
  \Base} \lambda \alpha. (\lambda n. fn\alpha)$.
  \medskip

  \noindent$\rho \equiv \sigma \to \tau$:
  Fix $g^{\Base \to (\sigma \to \tau)^{\dagger}}$ and $f^{\Base \to
  \Baire \to \sigma \to \tau}$, and suppose 
  that
  $\forall n^{\Base} g(n) \sim_{\sigma \to \tau} f(n)$.
  Let $\gamma^{\Base^{\dagger}}$ and $h^{\Baire \to \Base}$
  satisfy $\gamma \sim_{\Base} h$. We must show that
  \[
    \KE_{\sigma \to \tau}(g)(\gamma) \sim_{\sigma \to \tau} \lambda
    \alpha. f(h(\alpha))\alpha,
  \]
  where  $\KE_{\sigma \to \tau}(g)(\gamma) = \lambda u^{\sigma^{\dagger}}.
  \KE_{\tau}(\lambda x^{\Base}. gxu)\gamma$. To this end, fix
  $u^{\sigma^{\dagger}}$ and $y^{\Baire \to \sigma}$, and
  suppose that $u \sim_{\sigma} y$. We must show that
  \[
    \KE_{\tau}(\lambda x. gxu)\gamma \sim_{\tau} \lambda \alpha.  f(h(\alpha))\alpha(y\alpha).
  \]
  By induction hypothesis, it suffices to show that
  \[
    \forall n^{\Base} gnu \sim_{\tau} \lambda \alpha.
    fn\alpha(y\alpha).
  \]
  But this follows from the assumptions $\forall n^{\Base} g(n) \sim_{\sigma
  \to \tau} f(n)$ and $u \sim_{\sigma} y$.
\end{proof}

\begin{proof}[Proof of Lemma \ref{prop:LogicalRelation}.]
  By induction on terms in contexts.
  \medskip

  \noindent $\Gamma, x^{\rho}, \Delta \vdash x\colon{\rho}$: Trivial.
  \medskip

  \noindent $\Gamma \vdash \mathtt{C} \colon {\rho}$: We deal with each
  constant:
  \medskip

  \noindent $0^{\Base}$: We must
  show $\eta(0) \bullet \alpha = 0$, which follows from \eqref{Axiom:eta}.
  \medskip

  \noindent $\Succ^{\Base \to \Base}$: 
  By Lemma \ref{lem:TranslationKE}, it suffices to show that
  $\forall n^{\Base} \eta(\Succ(n)) \sim_{\Base} \lambda
  \alpha.\Succ(n)$.
  This follows from \eqref{Axiom:eta}.
  \medskip 

  \noindent $\Omega^{\Base \to \Base}$: 
  By Lemma \ref{lem:TranslationKE}, it suffices to show that
  $\forall n^{\Base} \At(n) \sim_{\Base} \lambda \alpha.
  \alpha(n)$. This follows from \eqref{Axiom:At}.
  \medskip

  \noindent $\Rec_{\rho}$: Let $u^{\rho^{\dagger}}$ and 
  $y^{\Baire \to \rho}$ such that $u \sim_{\rho} y$, and let
  $F^{\Base^{\dagger} \to \rho^{\dagger} \to \rho^{\dagger}}$ and
  $f^{\Baire \to \Base \to \rho \to \rho}$ such that
  $F \sim_{\Base \to \rho \to \rho} f$.  We must show that
  \[
    \KE_{\rho}(\Rec_{\rho^{\dagger}}(u,\lambda x.
    F(\eta(x)))) \sim_{\Base \to \rho} \lambda
    \alpha.\Rec(y\alpha, f\alpha).
  \]
  By Lemma \ref{lem:TranslationKE}, it suffices to show that
  \[
    \forall n^{\Base}\, \Rec_{\rho^{\dagger}}(u,\lambda x.
    F(\eta(x)))n \sim_{\rho} 
    \lambda \alpha.\Rec(y\alpha, f\alpha)n,
  \]
  which follows by a straightforward induction on $n$.
  \medskip

  \noindent $\Gamma \vdash \lambda x^{\rho}. t^{\sigma} \colon \rho \to
  \sigma$:  Immediate from induction hypothesis for $\Gamma, x^{\rho}
  \vdash t: \sigma$. 
  \medskip

  \noindent $\Gamma \vdash u^{\rho \to \sigma}v^{\rho}$:
  Immediate from induction hypothesis.
\end{proof}
We now complete the proof of the representation theorem.
\begin{proof}[Proof of Proposition \ref{prop:MonadicFramework}]
  Let $Y \colon \Baire \to \Base$ be a closed term of $\HAw$.
  Then $Y\Omega$ is a closed term of type $\Base$ in indeterminate
  $\Omega$. Put $\gamma := \lift{(Y\Omega)} = \lift{Y}\gen$. 
  By Lemma \ref{prop:LogicalRelation}, we have
  $\gamma \sim_{\Base} \lambda \alpha. Y\alpha$, i.e., 
  $\forall \alpha^{\Baire} \gamma \bullet \alpha = Y\alpha$.
\end{proof}

\subsection{Examples}
We give some examples of the structure $(T\Base, \eta, \KE_{\Base},
\At, \_\bullet\_)$ specified in Proposition \ref{prop:MonadicFramework}.
\subsubsection{Dialogue trees}\label{sec:Dialogue}
We show how Escard\'o's dialogue model 
\cite{EscardoEffectfulForcing} fits into the framework of Proposition~\ref{prop:MonadicFramework}.
The type $\DT$ of dialogue trees has two constructors
\begin{align*}
  \eta &\colon \Base \to \DT,\\
  \mathtt{D} &\colon \Base \to (\Base \to \DT) \to \DT.
\end{align*}
The constructor $\eta$ creates a leaf node of a tree labelled by a
natural number, which represents a possible result of the computation.
The constructor $\mathtt{D}$ creates an internal node which is labelled by a
natural number and has countably many branches. Internal nodes guide the
computation toward the leaves; see definition of $\_\bullet\_$ below.

The recursor $\mathcal{R}^{\DT}_{\rho}$ (for each type $\rho$) of dialogue trees has a type
\[
   (\Base \to \rho) \to
   \left( \Base \to \left( \Base \to \DT \right) \to \left( \Base \to
   \rho \right) \to \rho \right) \to \DT \to \rho,
\]
and satisfies the following defining equations:
\begin{align*}
  \mathcal{R}^{\DT}_{\rho}(u,f,\eta(n)) &= u(n),\\
  \mathcal{R}^{\DT}_{\rho}(u,f,\mathtt{D}n\varphi)
  &= fn\varphi(\lambda x^{\Base}.
  \mathcal{R}^{\DT}_{\rho}(u,f,\varphi(x))).
\end{align*}
With the help of recursors,
functions $\KE_{\Base} \colon (\Base \to \DT) \to (\DT \to \DT)$ and
$\_\bullet\_ \colon \DT \to \Baire \to \Base$ are defined as
\begin{align*}
  \KE_{\Base}(f,\eta(n)) &= f(n), \\
  \KE_{\Base}(f,\mathtt{D}n\varphi)&= \mathtt{D}n(\lambda x^{\Base}.
  \KE_{\Base}(f, \varphi(x))), \\
  \eta(n) \bullet \alpha &= n,\\
  \mathtt{D}n\varphi \bullet \alpha &= \varphi(\alpha(n)) \bullet \alpha.
\end{align*}
Finally, $\At \colon \Base \to \DT$ is defined as $\At := \lambda
x^{\Base}.  \mathtt{D}x\eta$.  

Let $\HAw + \DT$ be an extension of $\HAw$ with the type $\DT$ of
dialogue trees as an extra base type.
The extension includes the constructors and recursors of
dialogue trees, the defining equations of the recursors, and the
following induction schema for dialogue trees:
\begin{equation*}
  \forall x^{\Base} A(\eta(x)) \wedge \forall x^{\Base} \forall
  \varphi^{\Base \to \DT} \left[ \forall n^{\Base}  A(\varphi(n)) \imp A(\mathtt{D}x\varphi)\right]
  \imp \forall \gamma^{\DT} A(\gamma).
\end{equation*}
In $\HAw + \DT$, one can show that the structure
$(\DT, \eta, \KE_{\Base}, \At, \_\bullet\_)$ satisfies
\eqref{Axiom:eta}, \eqref{Axiom:KE}, and \eqref{Axiom:At} (cf.\  Escard\'o \cite[Section
3]{EscardoEffectfulForcing}). Hence, Proposition
\ref{prop:MonadicFramework} instantiated with the type of dialogue
trees is valid in $\HAw + \DT$: every closed term $Y \colon
\Baire \to \Base$ of $\HAw$ can be represented by a dialogue tree.

\subsubsection{Brouwer-operations}\label{sec:BrouwerTree}
Dialogue trees are convenient for studying continuity properties of
definable functions.  For the proof of bar theorem, however,
Brouwer-operations (inductively defined neighbourhood functions \cite[Chapter 4,
Section 8.4]{ConstMathI}) are more suitable.  

The type $\BT$ of Brouwer-operations has two constructors
\begin{align*}
  \eta &\colon \Base \to \BT,\\
  \sup &\colon (\Base \to \BT) \to \BT,
\end{align*}
where $\eta$ creates a leaf node labelled by a natural number, and 
$\sup$ creates an internal node from countably many subtrees.

The recursor $\mathcal{R}^{\BT}_{\rho}$ (for each type $\rho$) of Brouwer-operations has a type
\[
   (\Base \to \rho) \to
   \left( \left( \Base \to \BT \right) \to \left( \Base \to
   \rho \right) \to \rho \right) \to \BT \to \rho,
\]
and satisfies the defining equations:
\begin{align*}
  \mathcal{R}^{\BT}_{\rho}(u,f,\eta(n)) &= u(n),\\
  \mathcal{R}^{\BT}_{\rho}(u,f,\sup \varphi)
  &= f\varphi(\lambda x^{\Base}.  \mathcal{R}^{\BT}_{\rho}(u,f,\varphi(x))).
\end{align*}
With the help of recursors,
functions $\KE_{\Base} \colon (\Base \to \BT) \to (\BT \to \BT)$ and
$\_\bullet\_ \colon \BT \to \Baire \to \Base$ are defined as
\begin{align*}
  \KE_{\Base}(f,\gamma) &= \mathsf{Aux}(f,\gamma,\nil),\\
  \eta(n) \mathop{\bullet} \alpha &= n, \\
  \sup \varphi \mathop{\bullet} \alpha &= \varphi(\alpha(0)) \bullet
  (\lambda x. \alpha(\Succ(x))).
\end{align*}
Here $\mathsf{Aux} \colon (\Base \to \BT)
\to \BT \to \Base^{*} \to \BT$ is a function defined with a help
of another auxiliary function  $\mathsf{skip} \colon \BT \to
\Base^{*} \to \BT$ as follows:
\begin{align*}
  \mathsf{Aux}(f,\eta(n),a) &= \mathsf{skip}(f(n),a), \\
  \mathsf{Aux}(f,\sup \varphi, a) &= \sup (\lambda x^{\Base}. \mathsf{Aux}(f,
  \varphi(x), a * \seq{x})),\\
  \mathsf{skip}(\gamma, \nil) &= \gamma,\\
  \mathsf{skip}(\eta(n), \seq{x} * a) &= \mathsf{skip}(\eta(n), a),\\
  \mathsf{skip}(\sup \varphi, \seq{x} * a) &=
  \mathsf{skip}(\varphi(x), a).
\end{align*}
Finally, $\At \colon \Base \to \BT$ is defined by a primitive recursion:
\begin{align*}
  \At(0) &= \sup \eta, \\
  \At(\Succ(n)) &= \sup (\lambda x^{\Base}.\At(n)).
\end{align*}

Let $\HAw + \BT$ be an extension of $\HAw$ with the type $\BT$ of
Brouwer-operations as an extra base type.%
\footnote{$\HAw + \BT$ is similar to system~$\SystemT_{1}$
described in Zucker \cite[6.3.6 (b)]{ZuckerIteratedIDTreeOrdinal}.}
The extension includes the constructors and recursors of
Brouwer-operations, the defining equations of the recursors, and the
following induction schema for Brouwer-operations:
\begin{equation*}
  \forall x^{\Base} A(\eta(x)) \wedge \forall \varphi^{\Base \to \BT}
  \left[ \forall n^{\Base}  A(\varphi(n)) \imp A(\sup \varphi)\right] \imp \forall \gamma^{\BT} A(\gamma).
\end{equation*}
In $\HAw + \BT$, one can show that
\[
  f(\gamma \bullet \alpha) \bullet (a * \alpha)
  =
  \mathsf{Aux}(f,\gamma, a) \bullet \alpha 
\]
for all $f$, $\gamma$, $a$, and $\alpha$, from
which \eqref{Axiom:KE} follows. Conditions \eqref{Axiom:eta} and
\eqref{Axiom:At} are easy to check.  Hence, Proposition
\ref{prop:MonadicFramework} instantiated with the type of
Brouwer-operations is valid in $\HAw + \BT$.

Furthermore, Theorem \ref{thm:BarCont} holds
in $\HAw + \BT$: for any closed term $Y \colon \Baire \to \Base$ of $\HAw$,
there is a closed term $\gamma \colon \Base^{*} \to \Base$ of $\HAw +
\BT$ such that
\begin{enumerate}
\item  $\HAw + \BT \vdash \text{$\gamma$ is a neighbourhood function of
  $Y$}$,
\item 
    $
    \HAw + \BT \vdash S_{\gamma} \subseteq Q \wedge \Ind(Q)\imp
    Q(\nil)$ for any predicate $Q$ on ${\Base^{*}}$.
\end{enumerate}
To see this, first, each Brouwer-operation $\gamma$ determines a neighbourhood
function $\delta(\gamma) \colon \Base^{*} \to \Base$ as follows:
\begin{align*}
  \delta(\eta(n))(a) &= \Succ(n),\\
  \delta(\sup \varphi)(\nil) &= 0,\\
  \delta(\sup \varphi)(\seq{x} * a) &= \delta(\varphi(x))(a).
\end{align*}
It is easy to see that if 
a Brouwer-operation $\gamma$
represents a function $f \colon \Baire \to \Base$, i.e., $\forall
\alpha^{\Baire} \gamma \bullet \alpha = f(\alpha)$, then 
$\delta(\gamma)$ is a neighbourhood function of $f$.

Second, it is well known that $\delta(\gamma)$ satisfies induction
over unsecured sequences~\cite[Chapter 4, Proposition
8.12]{ConstMathI}, i.e., 
  $
  S_{\delta(\gamma)} \subseteq Q  \wedge \Ind(Q) \imp Q(\nil)
  $
for any predicate $Q$ on $\Base^{*}$.

\subsection{Non-standard representation as a model construction}
\label{sec:NonStdAsModels}
  If we think of Proposition \ref{prop:MonadicFramework} as a 
  set-theoretical model construction as in Escard\'o
  \cite{EscardoEffectfulForcing} --- reading $\Base$ as the set of
  natural numbers, $T\Base$ as another set,  and $\eta, \KE_{\Base},
  \At$, and $\_\bullet\_$ as functions --- then we obtain a family of
  representation theorems for system~$\SystemT$ definable functions
  from $\Baire$ to $\Base$. Specifically, any $\SystemT$-definable
  set-theoretical function $f \colon \Baire \to \Base$ can be
  represented by an element of $T\Base$.
  In the following, we assume this set-theoretical reading of
  Proposition~\ref{prop:MonadicFramework}. The argument in this subsection
  can be carried out in a suitable constructive set theory, e.g.,
  Aczel's $\CZF$ \cite{Aczel-Rathjen-Note} extended with generalised
  inductive definitions.
  
  If one is interested in strong continuity properties of
  $\SystemT$-definable functions, then the dialogue model of Escard\'o
  \cite{EscardoEffectfulForcing} or set-theoretical version of
  Brouwer-operation model presented in Section \ref{sec:BrouwerTree}
  seem to be most suitable. These models allow us to show, for
  example, uniform continuity of $\SystemT$-definable functions 
  from $\Baire$ to $\Base$ on the Cantor space (cf.\ Section
  \ref{sec:UCont}).  Note, however,
  that the constructions of these models require generalised inductive
  definitions.

  If one is merely interested in point-wise continuity of
  $\SystemT$-definable functions, then one can use the following structure:
  \begin{equation}
    \label{eq:ContModel}
    \begin{aligned}
      T\Base &:= \left\{ f \colon \Baire \to \Base \mid \text{$f$ is
        point-wise continuous} \right\} \\
      \eta &:= \lambda n. \lambda \alpha. n \\
      \KE_{\Base} &:= \lambda f.\lambda g.\lambda \alpha.
      f(g(\alpha))(\alpha) \\
      \At &:= \lambda n.\lambda \alpha. \alpha(n) \\
      \_\bullet\_ &:= \lambda f. \lambda \alpha. f(\alpha)
    \end{aligned}
  \end{equation}
  where the lambda notations in \eqref{eq:ContModel} should be read 
  set-theoretically.\footnote{This model was suggested by Mart{\'\i}n Escard\'o.}
  With this model, one has that every $\SystemT$-definable function
  from $\Baire$ to $\Base$ is point-wise continuous.

  If one is interested in stronger continuity properties in the
  absence of generalised inductive definitions, one may
  instantiate the structure $(T\Base, \eta,
  \KE_{\Base},\At,{\_\bullet\_})$ with neighbourhood functions instead
  of Brouwer-operations:
  \begin{equation}
    \begin{aligned}\label{eq:NbhModel}
      T\Base' &:= \left\{ \gamma \colon \Base^{*} \to \Base \mid
      \text{$\gamma$ is a neighbourhood function} \right\} \\
      \eta' &:= \lambda n. \lambda a. n + 1\\
      \KE_{\Base}' &:= \lambda f. \lambda
      \gamma.\lambda a.
      \sg(\gamma(a)) \cdot f(\gamma(a) \dotminus 1)(a)\\
      \At' &:= \lambda n. \lambda a.
      \begin{cases}
        0 & \text{if $\lh{a} \leq n$}\\
        a_{n} + 1 & \text{otherwise}
      \end{cases}\\
      \_ \bullet'\!\_ &:= \lambda \gamma. \lambda \alpha. 
      \gamma(\overline{\alpha}(\mu n. \gamma(\overline{\alpha}n) > 0)) \dotminus 1
    \end{aligned}
  \end{equation}
where $\cdot$ is the multiplication, $\sg$ is the signum
function, and $\mu n. A(n)$ is the bounded search function.
Note that in the definition of $\_ \bullet'\!\_$, the condition $\gamma(\overline{\alpha}n) > 0$
is satisfied for some $n$ because $\gamma$ is a neighbourhood
function.

The idea behind the ``neighbourhood function model'' presented in
\eqref{eq:NbhModel} is expressed in the following lemma.
\begin{lemma}\label{lem:NbhRepresenation}
  Let $(T\Base, \eta,
  \KE_{\Base},\At,{\_\bullet\_})$
  be the continuous model presented in $\eqref{eq:ContModel}$
  and let $(T\Base', \eta',
  \KE_{\Base}',\At',{\_\bullet'\!\_})$
  be the neighbourhood function model presented in
  \eqref{eq:NbhModel}. Then
  \begin{enumerate}
    \item For each $n \in \Base$, $\eta'(n)$ and $\At'(n)$
      are neighbourhood functions of $\eta(n)$ and $\At(n)$,
      respectively;

    \item If $h \colon \Base \to T\Base'$ and $f \colon \Base \to
      T\Base$ are functions such that $h(n)$ is a neighbourhood
      function of $f(n)$ for each $n \in \Base$, and if $\gamma$ is a
      neighbourhood function of $g$, then $\KE_{\Base}'(h)(\gamma)$ is a
      neighbourhood function of $\KE_{\Base}(f)(g)$;

    \item 
    If $\gamma$ is a neighbourhood function of $f$, then
      $
      \gamma \bullet' \alpha = f \bullet \alpha
      $
    for all $\alpha \in \Baire$.
  \end{enumerate}
\end{lemma}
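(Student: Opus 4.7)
The plan is to verify each clause by direct unfolding of the definitions in \eqref{eq:ContModel} and \eqref{eq:NbhModel}, using only the monotonicity and bar conditions from Definition \ref{def:NbhFunc}. I expect clause (2) to require the most care, since it combines the bar condition for $\gamma$ with the family of bar conditions carried by $h$, and one must be careful to invoke monotonicity of $\gamma$ at the right moment.

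For clause (1), the function $\eta'(n)$ is constantly $n+1$, so both conditions in Definition \ref{def:NbhFunc} are trivial and the inducing condition amounts to $\eta'(n)(\overline{\alpha}k) \dotminus 1 = n = \eta(n)(\alpha)$. For $\At'(n)$, the witness for the bar condition at $\alpha$ is $k := n+1$, since then $|\overline{\alpha}k| > n$ and $\At'(n)(\overline{\alpha}(n+1)) = \alpha(n)+1 > 0$; monotonicity follows because, once $|a|>n$, we have $(a*b)_n = a_n$ so extensions do not change the value; the inducing condition reduces to $(\overline{\alpha}k)_n + 1 \dotminus 1 = \alpha(n) = \At(n)(\alpha)$ whenever $k > n$.

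The main work is clause (2). Write $\delta := \KE_{\Base}'(h)(\gamma)$, so $\delta(a) = \sg(\gamma(a)) \cdot h(\gamma(a)\dotminus 1)(a)$. For monotonicity, suppose $\delta(a) > 0$; then $\gamma(a) > 0$ and $h(\gamma(a)\dotminus 1)(a) > 0$. For any $b$, monotonicity of $\gamma$ gives $\gamma(a*b) = \gamma(a)$, and monotonicity of $h(\gamma(a)\dotminus 1)$ gives $h(\gamma(a)\dotminus 1)(a*b) = h(\gamma(a)\dotminus 1)(a)$, hence $\delta(a*b) = \delta(a)$. For the bar condition at $\alpha$, first use the bar property of $\gamma$ to obtain $k$ with $\gamma(\overline{\alpha}k) > 0$; since $\gamma$ induces $g$, we have $\gamma(\overline{\alpha}k) = g(\alpha)+1$. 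Then apply the bar property of $h(g(\alpha))$ to obtain $m$ with $h(g(\alpha))(\overline{\alpha}m) > 0$. Extending $m$ by $k$ if necessary and applying monotonicity of $\gamma$ (valid once $k$ initial segments are passed), we may assume $m \geq k$ so that $\gamma(\overline{\alpha}m) = g(\alpha)+1$ as well, whence $\delta(\overline{\alpha}m) = h(g(\alpha))(\overline{\alpha}m) > 0$. Finally, to see that $\delta$ induces $\KE_{\Base}(f)(g) = \lambda\alpha.\, f(g(\alpha))(\alpha)$: if $\delta(\overline{\alpha}n) > 0$, then $\gamma(\overline{\alpha}n) \dotminus 1 = g(\alpha)$ and $h(g(\alpha))(\overline{\alpha}n) \dotminus 1 = f(g(\alpha))(\alpha)$, so $\delta(\overline{\alpha}n) \dotminus 1 = f(g(\alpha))(\alpha)$, as required.

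Clause (3) is then immediate: if $\gamma$ is a neighbourhood function of $f$, the bar property of $\gamma$ guarantees that the bounded search $k := \mu n.\, \gamma(\overline{\alpha}n) > 0$ terminates, and the inducing condition gives $\gamma(\overline{\alpha}k) \dotminus 1 = f(\alpha) = f\bullet\alpha$, which is exactly $\gamma \bullet' \alpha$ by definition. The whole argument is routine; the only subtle point is, as noted, the bar argument in (2), where one must compose two bar-type witnesses and exploit the fact that monotonicity of $\gamma$ propagates the value $g(\alpha)+1$ past the initial bar.
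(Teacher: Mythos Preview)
Your proposal is correct and follows the same approach the paper intends: the paper's own proof is the single word ``Straightforward'', and your write-up is precisely the routine unfolding of Definitions~\ref{def:NbhFunc}, \eqref{eq:ContModel}, and \eqref{eq:NbhModel} that this word stands in for. The only place requiring any thought is indeed the bar argument in clause~(2), and you handle it correctly by taking $\max(k,m)$ and using monotonicity of both $\gamma$ and $h(g(\alpha))$.
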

\begin{proof}
  Straightforward.
\end{proof}
With the neighbourhood function model, one has that every
$\SystemT$-definable function from $\Baire$ to  $\Base$ has a
neighbourhood function.

\section{Proof of bar theorem}\label{sec:ProofBarThm}
The proof of bar theorem (Theorem \ref{thm:BarCont}) is based on
the set-theoretical neighbourhood function model presented in
\eqref{eq:NbhModel}.  However, since the type of ``neighbourhood
functions'' is not directly available in $\HAw$, we need to make some
adjustments to the proof of Proposition \ref{prop:MonadicFramework}.
Moreover, the domain of a neighbourhood function, unlike that of a
Brouwer-operation,  does not necessarily admit bar induction.
Nevertheless, when a neighbourhood function is presented as a concrete
term of $\HAw$, we can draw stronger properties from it using logical
relations and induction on terms. Those are the basic ideas of the
proof presented below.

First, we translate each type $\rho$ to the corresponding type in the
type structure over $\Base^{*} \to \Base$:
\begin{align*}
  \Base^{\dagger} &:= \Base^{*} \to \Base
  \\
  (\rho \to \sigma)^{\dagger} &:= \rho^{\dagger} \to \sigma^{\dagger}
\end{align*}
We think of $\Base^{\dagger}$ as the type of neighbourhood
functions. Obviously, $\Base^{\dagger}$ contains functions that are not
neighbourhood functions. We take care of this issue by
modifying the logical relation below.%

We translate each term in context $\Gamma \vdash t[\Omega] \colon \rho$ in
indeterminate $\Omega$ 
as in Section \ref{sec:NonStdInterpretation}, instantiating $\eta$,
$\KE_{\Base}$, and $\At$ with the following terms:
\begin{equation}
  \label{eq:FormalNbhModel}
\begin{aligned}
  \eta &:= \lambda x^{\Base}. \lambda a^{\Base^{*}}. \Succ(x)\\
  \KE_{\Base} &:= \lambda f^{\Base \to \lift{\Base}}. \lambda
  \gamma^{\lift{\Base}}.\lambda a^{\Base^{*}}.
  \sg(\gamma(a)) \cdot f(\gamma(a) \dotminus 1)(a)\\
  \At &:= \lambda n^{\Base}. \lambda a^{\Base^{*}}.
  \begin{cases}
    0 & \text{if $\lh{a} \leq n$}\\
    \Succ(a_{n}) & \text{otherwise}
  \end{cases}
\end{aligned}
\end{equation}
where $\cdot$ and $\sg$ are the primitive recursive multiplication and
signum function, respectively. Note that \eqref{eq:FormalNbhModel}
defines terms of $\HAw$ while \eqref{eq:NbhModel} defines
set-theoretical functions.

The main difference between the interpretation in this section 
and that of Section \ref{sec:NonStdInterpretation} is the lack of
term $\_\bullet\_ \colon \lift{\Base} \to \Baire \to \Base$.
Ideally, we would define it as a partial application
$\gamma \bullet \alpha := \gamma(\overline{\alpha}(\mu n.
\gamma(\overline{\alpha}n) > 0)) \dotminus 1$, which would be total
if $\lift{\Base}$ were the type of neighbourhood functions (which is
not).
 
We deal with the lack of function $\_\bullet\_$ by
modifying the logical relation \eqref{def:LogicalRel} as follows:
for each type $\rho$, define a binary predicate $\approx_{\rho}$ on
$\rho^{\dagger}$ and $\Baire \to \rho$ by induction on types:
\begin{align*}
  \gamma \approx_{\Base} f &\equiv \text{$\gamma$ is a neighbourhood
  function of \;$f$}, \\
  G \approx_{\rho \to \sigma} F
  &\equiv \forall x^{\rho^{\dagger}}\forall y^{\Baire \to \rho}  
  \left[
  x \approx_{\rho}  y \imp Gx \approx_{\sigma}  \lambda \alpha. F
  \alpha (y \alpha)  \right].
\end{align*}
The proof of Lemma \ref{lem:TranslationKE} has to be adapted to $\approx_{\rho}$.
\begin{lemma}
  \label{lem:RelTranslationKE}
  For each type $\rho$, 
  \begin{equation*}
    \HAw \vdash
    \forall g^{\Base \to \rho^{\dagger}} \forall f^{\Base \to \Baire \to \rho}
    \left[ \forall n^{\Base} g(n) \approx_{\rho} f(n) 
      \imp
      \KE_{\rho}(g)\approx_{\Base \to \rho} \lambda \alpha.
    \lambda n. fn\alpha \right].
  \end{equation*}
\end{lemma}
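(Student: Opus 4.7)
The plan is to proceed by induction on the type $\rho$, in direct parallel with the proof of Lemma \ref{lem:TranslationKE}, but this time unpacking what $\approx_{\Base}$ demands at the base type. The inductive step for $\rho \equiv \sigma \to \tau$ will be almost verbatim the one in Lemma \ref{lem:TranslationKE}: given $g, f$ with $g(n) \approx_{\sigma \to \tau} f(n)$ and $\gamma \approx_{\Base} h$, unfold $\KE_{\sigma \to \tau}(g)(\gamma)u = \KE_{\tau}(\lambda x. gxu)\gamma$, fix $u \approx_{\sigma} y$, and apply the induction hypothesis at $\tau$, after noting that $\forall n.\, gnu \approx_{\tau} \lambda \alpha. fn\alpha(y\alpha)$ follows from the hypothesis on $g$ and $u \approx_{\sigma} y$. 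Nothing about the base type $\Base^{\dagger}$ appears in this step, so the argument is purely formal.

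The real content lies in the base case $\rho \equiv \Base$. Assume $g(n) \approx_{\Base} f(n)$ for all $n$ (each $g(n)$ is a neighbourhood function inducing $f(n)$) and $\gamma \approx_{\Base} h$. I need to verify the two clauses of Definition \ref{def:NbhFunc} for $\KE_{\Base}(g)(\gamma) = \lambda a.\, \sg(\gamma(a)) \cdot g(\gamma(a) \dotminus 1)(a)$, together with the induction condition. For the density clause, given $\alpha$, pick $m$ with $\gamma(\overline{\alpha}m) > 0$; then $\gamma(\overline{\alpha}m) = h(\alpha) + 1$, so $\gamma(\overline{\alpha}m) \dotminus 1 = h(\alpha)$. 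Using monotonicity of $\gamma$ and the density of the neighbourhood function $g(h(\alpha))$, pick $k \geq m$ with $g(h(\alpha))(\overline{\alpha}k) > 0$. Monotonicity of $\gamma$ gives $\gamma(\overline{\alpha}k) = h(\alpha) + 1 > 0$, whence $\KE_{\Base}(g)(\gamma)(\overline{\alpha}k) = g(h(\alpha))(\overline{\alpha}k) > 0$. The monotonicity clause is a calculation: if $\KE_{\Base}(g)(\gamma)(a) > 0$, then both $\gamma(a) > 0$ and $g(\gamma(a) \dotminus 1)(a) > 0$, and extending $a$ by $b$ preserves both by monotonicity of $\gamma$ and of $g(\gamma(a) \dotminus 1)$; the two values coincide. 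Finally, induction: when $\KE_{\Base}(g)(\gamma)(\overline{\alpha}k) > 0$ we have $\gamma(\overline{\alpha}k) = h(\alpha) + 1$ and the output equals $g(h(\alpha))(\overline{\alpha}k) = f(h(\alpha))(\alpha) + 1$, which is exactly the value of $\lambda \alpha. f(h\alpha)\alpha$ at $\alpha$ plus one.

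All three verifications rely crucially on the interplay of the monotonicity and density clauses for $\gamma$ and each $g(n)$ separately, together with the algebraic identities $\sg(m+1) = 1$ and $(m+1) \dotminus 1 = m$, which are primitive recursive and hence available in $\HAw$. Assembling these observations yields $\KE_{\Base}(g)(\gamma) \approx_{\Base} \lambda \alpha. f(h\alpha)\alpha$ and hence, as $h$ was arbitrary satisfying $\gamma \approx_{\Base} h$, the claim $\KE_{\Base}(g) \approx_{\Base \to \Base} \lambda \alpha. \lambda n. fn\alpha$.

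The main obstacle is the base-case verification, and specifically the density clause. Unlike Lemma \ref{lem:TranslationKE}, where the axiom \eqref{Axiom:KE} let the chain of equalities go through in one line, here one must actively construct the witness $k$ by combining two independent density witnesses --- one for $\gamma$ at some stage $m$ giving access to the value $h(\alpha)$, and then one for $g(h(\alpha))$ past $m$ --- and check that monotonicity of $\gamma$ keeps $\gamma(\overline{\alpha}k)$ pegged at $h(\alpha)+1$ so the multiplication by $\sg$ does not destroy the witness. Everything else in the proof, including the inductive step at higher types, is essentially formal bookkeeping.
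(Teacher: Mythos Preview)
Your proposal is correct and follows essentially the same approach as the paper: induction on types, with the higher-type step reduced verbatim to Lemma~\ref{lem:TranslationKE}, and the base case verifying the three clauses of Definition~\ref{def:NbhFunc} for $\KE_{\Base}(g)(\gamma)$ by combining density and monotonicity of $\gamma$ and of $g(h(\alpha))$. Your density witness $k \geq m$ is the paper's $N = \max\{n,m\}$ in slightly different packaging, and the remaining verifications match line for line.
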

\begin{proof}
  By induction on types.
  \medskip

  \noindent$\rho \equiv \Base$:
  Fix $g^{\Base \to \Base^{\dagger}}$ and $f^{\Base \to
  \Baire \to \Base}$, and suppose 
  that
  $\forall n^{\Base} g(n) \approx_{\Base} f(n)$.
  Let $\gamma^{\Base^{\dagger}}$ and $h^{\Baire \to \Base}$
  satisfy $\gamma \approx_{\Base} h$. Put
  \begin{equation}\label{eq:delta}
    \delta := \KE_{\Base}(g)(\gamma) = \lambda a. \sg(\gamma(a)) \cdot
    g(\gamma(a)\dotminus 1)(a).
  \end{equation}
  We must show that $\delta$ is a neighbourhood function of $\lambda
  \alpha. f(h(\alpha))\alpha$.

  To this end, fix $\alpha$.
  Since $\gamma$ is a neighbourhood function, there exists
  an $n$ such that $\gamma(\overline{\alpha}n) > 0$.
  Put $i := \gamma(\overline{\alpha}n) \dotminus 1$. Since $g(i)$ is a
  neighbourhood function (of $f(i)$), there exists
  an $m$ such that $g(i)(\overline{\alpha}m) > 0$.
  Put $N := \max\{n,m\}$ and $a := \overline{\alpha}N$. Then
  $\gamma(a) > 0$, $i = \gamma(a) \dotminus 1$, and $g(i)(a) > 0$.
  Thus $\delta(a) > 0$.

  Next, suppose that $\delta(a) > 0$. Then $\gamma(a)
  > 0$ and $g(\gamma(a) \dotminus 1)(a) > 0$.
  Thus, for any $b$, 
  \[
    \begin{aligned}
      \delta(a*b)
      &= \sg(\gamma(a*b)) \cdot g(\gamma(a*b) \dotminus 1)(a*b) \\
      &= \sg(\gamma(a)) \cdot g(\gamma(a) \dotminus 1)(a*b)\\
      &= \sg(\gamma(a)) \cdot g(\gamma(a) \dotminus 1)(a)\\
      &= \delta(a).
    \end{aligned}
  \]
  Hence, $\delta$ is a neighbourhood function.

  Lastly, let $\alpha$ and $n$ 
  such that $\delta(\overline{\alpha}n)>0$.
  Then $\gamma(\overline{\alpha}n) > 0$, and so $h(\alpha) =
  \gamma(\overline{\alpha}n) \dotminus 1$.
  Since $g(\gamma(\overline{\alpha}n \dotminus 1))(\overline{\alpha}n)
  = g(h(\alpha))(\overline{\alpha}n) > 0$, we have 
  $f(h(\alpha))\alpha = g(h(\alpha))(\overline{\alpha}n) \dotminus 1 =
  \delta(\overline{\alpha}n) \dotminus 1$. Hence, $\delta$ induces
   $\lambda \alpha. f(h(\alpha))\alpha$.
  \medskip

  \noindent$\rho \equiv \sigma \to \tau$:
  The proof is exactly the same as in the inductive case of Lemma~\ref{lem:TranslationKE}.
\end{proof}

\begin{lemma}\label{lem:RelTranslation}
  For any term $\Gamma \vdash t[\Omega]\colon{\rho}$
  in indeterminate $\Omega$ and context
  $\Gamma \equiv x_{0}^{\rho_{0}}, \dots, x_{n-1}^{\rho_{n-1}}$,
  \begin{equation*}
    \begin{multlined}
      \HAw \vdash
      \forall u_{0}^{\rho_{0}^{\dagger}} \cdots \forall u_{n-1}^{\rho_{n-1}^{\dagger}}
      \forall y_{0}^{\Baire \to \rho_{0}}\!\! \cdots
      \forall y_{n-1}^{\Baire \to\rho_{n-1}}
      \Bigl[
        u_{0} \approx_{\rho_{0}} y_{0} \wedge \cdots 
        \wedge u_{n-1} \approx_{\rho_{n-1}} y_{n-1} \\
        \imp
        t^{\dagger}[\overline{u}/\lift{\overline{x}}] 
        \approx_{\rho}
        \lambda \alpha^{\Baire}.  t[\alpha/\Omega,\overline{y}(\alpha)/\overline{x}]
      \Bigr],
    \end{multlined}
  \end{equation*}
  where 
  $t^{\dagger}[\overline{u}/\lift{\overline{x}}]$ and 
  $t[\alpha/\Omega,\overline{y}(\alpha)/\overline{x}]$
  are defined as in \eqref{eq:substitution}.
\end{lemma}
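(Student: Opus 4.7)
The proof will proceed by induction on the derivation of $\Gamma \vdash t[\Omega] \colon \rho$, following the identical structure to the proof of Lemma~\ref{prop:LogicalRelation}. The variable case is trivial, and the cases for lambda abstraction and application are immediate from the induction hypothesis; the real work is localised in the constant cases, where the abstract equations \eqref{Axiom:eta}, \eqref{Axiom:KE}, \eqref{Axiom:At} are no longer available and must be replaced by direct verifications against the concrete definitions in \eqref{eq:FormalNbhModel}, with Lemma~\ref{lem:RelTranslationKE} playing the role of Lemma~\ref{lem:TranslationKE} throughout.

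For the constant $0^{\Base}$, the claim is that $\eta(0) = \lambda a.\Succ(0)$ is a neighbourhood function of $\lambda \alpha. 0$, which is immediate since this function is uniformly positive with value $1$, and $1 \dotminus 1 = 0$. For $\Succ$, Lemma~\ref{lem:RelTranslationKE} reduces the claim to showing $\eta(\Succ(n)) \approx_{\Base} \lambda \alpha.\Succ(n)$ for each $n$, which is analogous. For $\Omega$, Lemma~\ref{lem:RelTranslationKE} reduces the claim to $\At(n) \approx_{\Base} \lambda \alpha.\alpha(n)$; here I would unfold the definition of $\At$ and check by cases on whether $|a| \leq n$ that it is monotone, that $\At(n)(\overline{\alpha}(n+1)) = \Succ(\alpha(n)) > 0$ so the bar condition holds, and finally that whenever $\At(n)(\overline{\alpha}k) > 0$ we have $k > n$ and $\At(n)(\overline{\alpha}k) \dotminus 1 = \alpha(n)$.

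For $\Rec_{\rho}$, given $u \approx_{\rho} y$ and $F \approx_{\Base \to \rho \to \rho} f$, Lemma~\ref{lem:RelTranslationKE} reduces the goal to showing
\[
  \forall n^{\Base}\, \Rec_{\rho^{\dagger}}(u, \lambda x. F(\eta(x)))\,n \;\approx_{\rho}\; \lambda \alpha.\Rec_{\rho}(y\alpha, f\alpha)\,n,
\]
which follows by a straightforward induction on $n$ using the already established fact that $\eta(n) \approx_{\Base} \lambda\alpha.n$, together with the definition of $\approx_{\rho\to\sigma}$ applied to $F$ and the induction hypothesis at $n-1$.

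The main obstacle is the verification for $\At$, which is the only place where the particular computational structure of the neighbourhood function model in \eqref{eq:FormalNbhModel} is really exercised: one must pay careful attention to the cut-off minus convention and to the three clauses defining a neighbourhood function that induces a given function. Once this case is in hand, everything else is a routine re-execution of the argument of Lemma~\ref{prop:LogicalRelation} with $\sim$ systematically replaced by $\approx$.
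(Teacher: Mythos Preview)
Your proposal is correct and follows essentially the same approach as the paper's own proof: induction on terms in context, with the variable, abstraction, and application cases handled trivially or by the induction hypothesis, and each constant case reduced via Lemma~\ref{lem:RelTranslationKE} to a direct verification against the concrete definitions in \eqref{eq:FormalNbhModel}. Your treatment of the $\At$ case is in fact more detailed than the paper's, which simply remarks that the claim ``follows from the definition of $\At$''; your unfolding of the three neighbourhood-function clauses is exactly what that remark amounts to.
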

\begin{proof}
  By induction on terms in contexts.
  \medskip

  \noindent $\Gamma, x^{\rho}, \Delta \vdash x\colon{\rho}$: Trivial.
  \medskip

  \noindent $\Gamma \vdash \mathtt{C} \colon {\rho}$: We deal with each
  constant:
  \medskip

  \noindent $0^{\Base}$: We must
  show that $0^{\dagger} = \eta(0) = \lambda a. \Succ(0)$ is a neighbourhood function of
  $\lambda \alpha. 0$, which is obvious.
  \medskip

  \noindent $\Succ^{\Base \to \Base}$: 
  By Lemma \ref{lem:RelTranslationKE}, it suffices to show that for
  each $n$, $\eta(\Succ(n)) = \lambda a. \Succ(\Succ(n))$ is a
  neighbourhood function of $\lambda \alpha.\Succ(n)$, which is
  obvious.
  \medskip 

  \noindent $\Omega^{\Base \to \Base}$: 
  By Lemma \ref{lem:RelTranslationKE}, it suffices to show that
  for each $n$, $\At(n)$ is a neighbourhood function of $\lambda \alpha.
  \alpha(n)$, which follows from the definition of $\At$.
  \smallskip

  \noindent $\Rec_{\rho}$: Let $u^{\rho^{\dagger}}$ and 
  $y^{\Baire \to \rho}$ such that $u \sim_{\rho} y$, and let
  $F^{\Base^{\dagger} \to \rho^{\dagger} \to \rho^{\dagger}}$ and
  $f^{\Baire \to \Base \to \rho \to \rho}$ such that
  $F \sim_{\Base \to \rho \to \rho} f$.  We must show that
  \[
    \KE_{\rho}(\Rec_{\rho^{\dagger}}(u,\lambda x.
    F(\eta(x)))) \sim_{\Base \to \rho} \lambda
    \alpha.\Rec(y\alpha, f\alpha).
  \]
  By Lemma \ref{lem:RelTranslationKE}, it suffices to show that
  \[
    \forall n^{\Base}\, \Rec_{\rho^{\dagger}}(u,\lambda x.
    F(\eta(x)))n \sim_{\rho} 
    \lambda \alpha.\Rec(y\alpha, f\alpha)n,
  \]
  which follows by a straightforward induction on $n$.
  \medskip

  \noindent $\Gamma \vdash \lambda x^{\rho}. t^{\sigma} \colon \rho \to
  \sigma$:  Immediate from induction hypothesis for $\Gamma, x^{\rho}
  \vdash t: \sigma$. 
  \medskip

  \noindent $\Gamma \vdash u^{\rho \to \sigma}v^{\rho}$:
  Immediate from induction hypothesis.
\end{proof}


For each predicate $Q$ on $\Base^{*}$, define
a predicate $P^{Q}_{\rho}$ on $\rho^{\dagger}$ by induction on types:
\begin{align*}
  P^{Q}_{\Base}(\gamma) &\equiv 
  \text{$\gamma$ is a neighbourhood function} \; \wedge\\
  &\qquad\qquad \forall a^{\Base^{*}} \left[ \forall b^{\Base^{*}} \left[S_{\gamma}(a*b) \imp
  Q(a*b)  \right] \wedge \Ind(Q) \imp Q(a)\right], \\
  P^{Q}_{\rho \to \sigma}(G) 
  &\equiv \forall x^{\rho^{\dagger}} \left[ P^{Q}_{\rho}(x)  \imp
  P^{Q}_{\sigma}(Gx)\right].
\end{align*}

First, we prove the following lifting property.
\begin{lemma}
  \label{lem:BarPropKE}
  For each type $\rho$, 
  \[
   \HAw \vdash  \forall g^{\Base \to \rho^{\dagger}}
    \left[ \forall n^{\Base} P^{Q}_{\rho}(g(n))
      \imp
    P^{Q}_{\Base \to \rho}(\KE_{\rho}(g)) \right].
  \]
\end{lemma}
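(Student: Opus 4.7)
\medskip
\noindent\textbf{Proof proposal.} The plan is to argue by induction on the type $\rho$, mirroring the proof of Lemma~\ref{lem:RelTranslationKE}, with the only real work at the base type. The inductive step $\rho \equiv \sigma \to \tau$ is purely structural: given $g^{\Base \to (\sigma \to \tau)^{\dagger}}$ with $\forall n\, P^{Q}_{\sigma \to \tau}(g(n))$, $\gamma$ with $P^{Q}_{\Base}(\gamma)$, and $u$ with $P^{Q}_{\sigma}(u)$, we need $P^{Q}_{\tau}(\KE_{\sigma \to \tau}(g)(\gamma)u) = P^{Q}_{\tau}(\KE_{\tau}(\lambda x.\, gxu)(\gamma))$. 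This follows by the induction hypothesis applied to the family $\lambda x.\,g(x)u$, whose components satisfy $P^{Q}_{\tau}$ by combining the assumptions on $g(x)$ and $u$.

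The base case $\rho \equiv \Base$ is the heart of the argument. Fix $g^{\Base \to \Base^{\dagger}}$ with $\forall n\, P^{Q}_{\Base}(g(n))$ and $\gamma$ with $P^{Q}_{\Base}(\gamma)$, and set $\delta := \KE_{\Base}(g)(\gamma)$, so that $\delta(a) = \sg(\gamma(a)) \cdot g(\gamma(a) \dotminus 1)(a)$. First, $\delta$ is a neighbourhood function: this is exactly the computation carried out in the base case of Lemma~\ref{lem:RelTranslationKE} (where the premises involving $f$ and $h$ were only needed to identify \emph{which} function $\delta$ induces; here we only need that $\gamma$ and each $g(i)$ are neighbourhood functions). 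So it remains to verify the bar--inductive property in $P^{Q}_{\Base}(\delta)$.

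Fix $a$, assume $\Ind(Q)$ and $\forall b\,[S_{\delta}(a*b) \imp Q(a*b)]$, and aim for $Q(a)$. The strategy is a two-level application of the bar-inductive clause of $P^{Q}_{\Base}$. First apply $P^{Q}_{\Base}(\gamma)$ at $a$: it suffices to show $\forall b\,[S_{\gamma}(a*b) \imp Q(a*b)]$. Fix such a $b$ and set $i := \gamma(a*b) \dotminus 1$. Now apply $P^{Q}_{\Base}(g(i))$ at $a*b$: it suffices to show $\forall c\,[S_{g(i)}((a*b)*c) \imp Q((a*b)*c)]$. Given such a $c$, the neighbourhood-function property of $\gamma$ (together with $\gamma(a*b)>0$) gives $\gamma(a*b*c) = \gamma(a*b) > 0$, so $\gamma(a*b*c) \dotminus 1 = i$, and therefore
\[
\delta(a*b*c) = \sg(\gamma(a*b*c)) \cdot g(i)(a*b*c) > 0,
\]
i.e.\ $S_{\delta}(a*b*c)$. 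The hypothesis on $\delta$ then yields $Q(a*b*c)$, closing the inner bar and producing $Q(a*b)$, which closes the outer bar and yields $Q(a)$.

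The main obstacle is managing the nested appeal to the clause in $P^{Q}_{\Base}$, in particular keeping track of the offsets $a$ and $a*b$ and using the neighbourhood property of $\gamma$ to identify the value $\gamma(a*b*c) \dotminus 1$ with the fixed index $i$ so that positivity of $\delta$ on the common extension actually follows. Once this bookkeeping is set up, no further ingredients are needed.
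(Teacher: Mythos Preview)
Your proposal is correct and follows essentially the same approach as the paper: induction on types, with the inductive case handled structurally via the definition of $\KE_{\sigma\to\tau}$, and the base case handled by a nested appeal to the bar-inductive clause of $P^{Q}_{\Base}$ first for $\gamma$ at $a$ and then for $g(i)$ at $a*b$. Your write-up is in fact slightly more explicit than the paper's in justifying $S_{\delta}(a*b*c)$ via $\gamma(a*b*c)=\gamma(a*b)$.
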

\begin{proof}
  By induction on types.
  \medskip

  \noindent $\rho \equiv \Base$: Fix $g^{\Base \to \lift{\Base}}$ and 
  suppose that $\forall n^{\Base}\, P^{Q}_{\Base}(g(n))$. We must
  show that $P^{Q}_{\Base \to \Base}(\KE_{\Base}(g))$, i.e.,
  \[
    \forall \gamma^{\Base^{\dagger}}\left[ P^{Q}_{\Base}(\gamma) \imp
    P^{Q}_{\Base}(\KE_{\Base}(g)\gamma)\right].
  \]
  Let $\gamma^{\Base^{\dagger}}$ such that $P^{Q}_{\Base}(\gamma)$.
  Put 
  \[
    \delta := \KE_{\Base}(g)(\gamma) = \lambda a. \sg(\gamma(a)) \cdot
    g(\gamma(a) \dotminus 1)(a).
  \]
  As shown in the proof of Lemma \ref{lem:RelTranslationKE},
  $\delta$ is a neighbourhood function.
  Fix $a^{\Base^{*}}$, and suppose that 
  $
  \forall b^{\Base^{*}}\left[ S_{\delta}(a * b ) \imp Q(a*b)\right]
  $
  and that $Q$ is inductive.  We must show that $Q(a)$. Since
  $P^{Q}_{\Base}(\gamma)$, it suffices to show that 
  \[
    \forall b^{\Base^{*}}\left[ S_{\gamma}(a * b ) \imp
    Q(a*b)\right].
  \]
  Let $b^{\Base^{*}}$ such that
  $S_{\gamma}(a*b)$, and put $i := \gamma(a*b) \dotminus 1$. Since
  $P^{Q}_{\Base}(g(i))$, to see that
  $Q(a*b)$ holds, it suffices to show that $\forall c^{\Base^{*}}
  \left[  S_{g(i)}(a*b*c) \imp Q(a*b*c) \right]$. Let
  $c^{\Base^{*}}$ such that $S_{g(i)}(a*b*c)$.
  Then $S_{\delta}(a*b*c)$ and so $Q(a*b*c)$ as required. 
  \medskip

  \noindent $\rho \equiv \sigma \to \tau$:
  Fix $g^{\Base \to \lift{(\sigma \to \tau)}}$ and suppose that $\forall n^{\Base}P^{Q}_{\sigma \to \tau}(g(n))$.
  Let $\gamma^{\Base^{\dagger}}$ and $u^{\sigma^{\dagger}}$ such that
  $P^{Q}_{\Base}(\gamma)$ and 
  $P^{Q}_{\sigma}(u)$. We must show that
  $P^{Q}_{\tau}(\KE_{\tau}(\lambda n^{\Base}. g(n)u)\gamma)$.
  Since $\forall n^{\Base}\; P^{Q}_{\tau}(g(n)u)$, 
  we have 
  $P^{Q}_{\Base \to \tau}(\KE_{\tau}(\lambda n^{\Base}. g(n)u))$
  by induction hypothesis.  Therefore
  $P^{Q}_{\tau}(\KE_{\tau}(\lambda n^{\Base}. g(n)u)\gamma)$.
\end{proof}

\begin{lemma}\label{lem:BarProp}
  For any term $\Gamma \vdash t[\Omega]\colon{\rho}$
  in indeterminate $\Omega$ and context
  $\Gamma \equiv x_{0}^{\rho_{0}}, \dots,  x_{n-1}^{\rho_{n-1}}$,
  \begin{gather*}
    \HAw \vdash
    \forall u_{0}^{\rho_{0}^{\dagger}} \cdots \forall u_{n-1}^{\rho_{n-1}^{\dagger}}
    \left[
      P^{Q}_{\rho_{0}}(u_{0}) \wedge \cdots \wedge
      P^{Q}_{\rho_{n-1}}(u_{n-1})
      \imp
      P^{Q}_{\rho}(t^{\dagger}[\overline{u}/\lift{\overline{x}}])
    \right].
  \end{gather*}
\end{lemma}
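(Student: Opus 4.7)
The plan is to proceed by structural induction on the derivation of $\Gamma \vdash t[\Omega] \colon \rho$, parallelling the proofs of Lemma \ref{prop:LogicalRelation} and Lemma \ref{lem:RelTranslation}, but with the predicate $P^Q_\rho$ in place of $\sim_\rho$ (respectively $\approx_\rho$) and with Lemma \ref{lem:BarPropKE} playing the role of Lemma \ref{lem:RelTranslationKE}. The variable case is immediate from the assumed hypotheses on the $u_i$, and the abstraction and application cases unfold from the induction hypotheses in the usual way. All the content therefore lies in verifying $P^Q_\rho(\lift{\mathtt{C}})$ for each constant $\mathtt{C}$.

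For $\lift{0} = \eta(0) = \lambda a. \Succ(0)$, observe that $S_{\eta(0)}(a)$ holds identically, so the hypothesis $\forall b [ S_{\eta(0)}(a*b) \imp Q(a*b) ]$ collapses to $\forall b.\, Q(a*b)$; specialising to $b = \seq{n}$ and applying $\Ind(Q)$ gives $Q(a)$. The constant $\lift{\Succ} = \KE_\Base(\lambda x. \eta(\Succ x))$ is handled by Lemma \ref{lem:BarPropKE}, which reduces the target to $\forall n.\, P^Q_\Base(\eta(\Succ n))$; the verification is identical to that for $\eta(0)$.

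The delicate case is $\lift{\Omega} = \gen = \KE_\Base(\lambda x. \At(x))$, where Lemma \ref{lem:BarPropKE} again reduces matters to $\forall n.\, P^Q_\Base(\At(n))$. From the definition one has $S_{\At(n)}(a) \iff \lh{a} > n$, so $\At(n)$ is trivially a neighbourhood function. Fixing $a$ and assuming both $\forall b [ S_{\At(n)}(a*b) \imp Q(a*b) ]$ and $\Ind(Q)$, I split on whether $\lh{a} > n$: if so, instantiating $b := \nil$ gives $Q(a)$ directly from the hypothesis; if not, every $b$ of length $n + 1 - \lh{a}$ satisfies $\lh{a*b} > n$ and hence $Q(a*b)$, so Lemma \ref{lem:inductive} (with $k := n + 1 - \lh{a}$) produces $Q(a)$. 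This is the only place where inductivity of $Q$ is used in an essential way, and I expect it to be the main obstacle.

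Finally, for $\lift{\Rec_\rho} = \lambda u.\lambda F. \KE_\rho(\Rec_{\lift{\rho}}(u, \lambda x. F(\eta(x))))$, given $u$ with $P^Q_\rho(u)$ and $F$ with $P^Q_{\Base \to \rho \to \rho}(F)$, the required $P^Q_{\Base \to \rho}(\KE_\rho(\Rec_{\lift{\rho}}(u, \lambda x. F(\eta(x)))))$ reduces via Lemma \ref{lem:BarPropKE} to showing $P^Q_\rho(\Rec_{\lift{\rho}}(u, \lambda x. F(\eta(x)))\,n)$ for every $n$. This follows by a routine external induction on $n$: the base case is $P^Q_\rho(u)$, and the step combines $P^Q_\Base(\eta(n))$ (verified above), the assumption on $F$, and the induction hypothesis to obtain $P^Q_\rho(F(\eta(n))(\Rec_{\lift{\rho}}(u, \lambda x. F(\eta(x)))\,n))$.
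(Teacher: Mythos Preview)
Your proposal is correct and follows essentially the same approach as the paper's proof: structural induction on terms in contexts, with Lemma~\ref{lem:BarPropKE} reducing each constant case to verifying $P^{Q}_{\Base}$ at $\eta(n)$ and $\At(n)$, and the $\At(n)$ case handled via Lemma~\ref{lem:inductive}. The only cosmetic difference is that the paper treats the $\At(n)$ case uniformly by setting $k := n \dotminus \lh{a} + 1$ rather than splitting on whether $\lh{a} > n$, and for $\eta(0)$ one can simply take $b = \nil$ instead of invoking $\Ind(Q)$.
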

\begin{proof}
  By induction on terms in contexts.
  \medskip

  \noindent $\Gamma, x^{\rho}, \Delta \vdash x\colon{\rho}$: Trivial.
  \medskip

  \noindent$\Gamma \vdash \mathtt{C} \colon {\rho}$: We deal with each constant
  below:
  \medskip 

  \noindent $0^{\Base}$: The term $0^{\dagger} \equiv \lambda a. \Succ(0)$ 
  obviously 
  satisfies the required property.
  \medskip

  \noindent $\Succ^{\Base \to \Base}$: 
  By Lemma \ref{lem:BarPropKE}, it suffices to show that
  $\forall n^{\Base}. P^{Q}_{\Base}(\lambda a. \eta(\Succ(n)))$, which is
  obvious.
  \medskip

  \noindent $\Omega^{\Base \to \Base}$: 
  By Lemma \ref{lem:BarPropKE}, it suffices to show that
  $\forall n^{\Base} P^{Q}_{\Base}(\At(n))$.
  Fix $n^{\Base}$. Then $\At(n)$ is a neighbourhood function.
  Let $a^{\Base^{*}}$ satisfy
  \[
    \forall b^{\Base^{*}}\left[ \At(n)(a*b) > 0 \to Q(a*b)\right]
  \]
  and suppose that $Q$ is inductive. Put $k := n \dotminus \lh{a} +
  1$. For each
  $b^{\Base^{*}}$ such that $\lh{b} = k$, we have $\At(n)(a*b) > 0$, and  so
  $Q(a*b)$. Thus, $Q(a)$ by Lemma \ref{lem:inductive}.
  \medskip

  \noindent $\Rec_{\rho}$: Let $u^{\rho^{\dagger}}$ and
  $F^{\Base^{\dagger} \to \rho^{\dagger} \to \rho^{\dagger}}$
  satisfy
  $P^{Q}_{\rho}$ and $P^{Q}_{\Base \to \rho \to \rho}$, respectively.
  We must show that
    $
    P^{Q}_{\Base \to \rho}(\KE_{\rho}(\Rec_{\rho^{\dagger}}(u,\lambda
    x.  F(\eta(x))))).
    $
  By Lemma \ref{lem:BarPropKE}, it suffices to show that
  $
  \forall n^{\Base} \;
  P^{Q}_{\rho}(\Rec_{\rho^{\dagger}}(u,\lambda x. F(\eta(x)))n).
  $
  This follows by a straightforward induction on $n$ using
  assumptions $P^{Q}_{\rho}(u)$ and $P^{Q}_{\Base \to \rho \to
  \rho}(F)$  and the fact that $P^{Q}_{\Base}(\eta(n))$
  for each $n^{\Base}$.\\[.5em]
  \noindent $\Gamma \vdash \lambda x^{\rho}. t^{\sigma} \colon \rho \to
  \sigma$:  Immediate from induction hypothesis for $\Gamma, x^{\rho}
  \vdash t\colon \sigma$. \\[.5em]
  \noindent $\Gamma \vdash u^{\rho \to \sigma}v^{\rho}$:
  Immediate from induction hypothesis.
\end{proof}

We now complete the proof of bar theorem.
\begin{proof}[Proof of Theorem \ref{thm:BarCont}]
  Let $Y \colon \Baire \to \Base$ be a closed term of $\HAw$.
  Then, $Y\Omega$ is a closed term of type $\Base$ in indeterminate
  $\Omega$. Thus,
  $\gamma \equiv (Y\Omega)^{\dagger}$ is a neighbourhood
  function of $Y$ by Lemma \ref{lem:RelTranslation}.
  By Lemma \ref{lem:BarProp}, we also have
  that
  $\HAw \vdash P^{Q}_{\Base}(\gamma)$ 
  for any predicate $Q$ on $\Base^{*}$.
  Hence
    $
     \HAw \vdash S_{\gamma} \subseteq Q \wedge \Ind(Q)\imp Q(\nil).
    $
\end{proof}

\section{Applications of bar theorem}\label{sec:Application}
We prove some well-known properties of $\HAw$
as applications of bar theorem.

\subsection{Uniform continuity on the Cantor space}\label{sec:UCont}
We show that the restriction of a closed term 
$Y \colon \Baire \to \Base$ to the Cantor space (the space of binary
sequences) is uniformly continuous.
\begin{theorem}
  For any closed term $Y \colon \Baire \to \Base$ of $\HAw$,
  \[
    \HAw \vdash \exists n^{\Base} \forall \alpha^{\Cantor}\forall
    \beta^{\Cantor}
    \left[ \overline{\alpha}n = \overline{\beta}n \imp Y\alpha = Y \beta \right].
  \]
\end{theorem}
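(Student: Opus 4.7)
The plan is to apply the bar theorem (Theorem \ref{thm:BarCont}) to $Y$, obtaining a closed term $\gamma \colon \Base^{*} \to \Base$ of $\HAw$ that is a neighbourhood function of $Y$ and for which bar induction holds. The statement of uniform continuity will then follow by bar-inducting over a predicate that records uniform continuity of $Y$ below a given finite sequence, restricted to binary extensions.

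Concretely, I define the predicate
\[
  Q(a) \;\equiv\; \exists n^{\Base}\, \forall \alpha^{\Cantor}\, \forall \beta^{\Cantor}\,
  \bigl[\, \overline{\alpha}n = \overline{\beta}n \;\imp\; Y(a*\alpha) = Y(a*\beta)\,\bigr]
\]
on $\Base^{*}$, and aim to show $S_{\gamma} \subseteq Q$ and $\Ind(Q)$; then bar induction gives $Q(\nil)$, which is precisely the desired uniform continuity on $\Cantor$.

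For $S_{\gamma} \subseteq Q$, suppose $\gamma(a) > 0$. By monotonicity of $\gamma$, $\gamma(a*b) = \gamma(a) > 0$ for every $b$, so for any $\alpha \in \Baire$ extending $a$, $\gamma(\overline{\alpha}|a|) > 0$ and hence $Y\alpha = \gamma(a) \dotminus 1$. In particular $Y(a*\alpha) = Y(a*\beta) = \gamma(a)\dotminus 1$ for all $\alpha, \beta \in \Cantor$, so $Q(a)$ holds trivially with $n=0$. For $\Ind(Q)$, suppose $Q(a*\seq{k})$ holds for every $k^{\Base}$; in particular we have moduli $n_0$ and $n_1$ witnessing $Q(a*\seq{0})$ and $Q(a*\seq{1})$ respectively. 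Set $n := \max\{n_{0},n_{1}\} + 1$. If $\alpha,\beta \in \Cantor$ agree on their first $n$ values, then $\alpha(0)=\beta(0) \in \{0,1\}$, and the tails $\lambda x.\alpha(\Succ(x))$ and $\lambda x.\beta(\Succ(x))$ agree on their first $n_{\alpha(0)}$ values, so $Y(a*\alpha) = Y(a*\seq{\alpha(0)} * \dots) = Y(a*\seq{\beta(0)} * \dots) = Y(a*\beta)$, establishing $Q(a)$.

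The argument is essentially routine once the predicate $Q$ is chosen correctly; the only mild subtlety is ensuring that the case split in the inductive step is genuinely binary (so that the maximum of two moduli suffices), which is why $Q$ is formulated using $\Cantor$ rather than $\Baire$. Since $\HAw$ proves $\gamma$ is a neighbourhood function of $Y$ and proves the bar induction principle for $S_{\gamma}$ against arbitrary $Q$, the whole argument is internal to $\HAw$, yielding the stated derivability.
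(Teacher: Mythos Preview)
Your proposal is correct and follows essentially the same approach as the paper: apply Theorem~\ref{thm:BarCont}, define a predicate expressing uniform continuity of $Y$ below a node on binary extensions, verify $S_{\gamma}\subseteq Q$ and $\Ind(Q)$, and conclude $Q(\nil)$. The only difference is cosmetic: the paper guards its predicate with the hypothesis $\Binary(a)$, whereas you omit it; since your $Q$ already quantifies only over $\alpha,\beta\in\Cantor$, the guard is unnecessary and your formulation is in fact slightly cleaner.
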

  Here, we use the abbreviation $\forall \alpha^{\Cantor} A(\alpha)  \equiv 
  \forall
    \alpha^{\Baire} \left[ \forall n^{\Base}\alpha n  \leq 1 \imp A(\alpha)  \right].
    $
\begin{proof}
    Let $Y$ be a closed term of type $\Baire \to \Base$.  By Theorem
    \ref{thm:BarCont}, there is a neighbourhood function $\gamma
    \colon \Base^{*} \to \Base$ of $Y$  which satisfies \eqref{eq:BI}
    for any predicate $Q$ on $\Base^{*}$.
    In particular, consider a predicate $Q$ on $\Base^{*}$ defined as
      \[
        Q(a) \equiv \Binary(a) \imp 
        \exists n^{\Base}
        \forall \alpha^{\Cantor}
        \beta^{\Cantor}
        \left[ \overline{\alpha}n = \overline{\beta}n \imp
        Y(a* \alpha) = Y(a * \beta) \right],
      \]
      where $\Binary(a) \equiv \forall n < \lh{a} \;  a_{n} \leq 1$.
     Clearly, $S_{\gamma} \subseteq Q$ and $Q$ is inductive. Hence $Q(\nil)$,
     which is the statement to be proved.
\end{proof}

\subsection{Closure under the rule of bar induction}
\label{sec:BIRule}
The \emph{rule of bar induction} (without parameters) says that
for any predicate $P$ on $\Base^{*}$ without
parameters other than $a$, if
\[
  \HAw \vdash \forall \alpha^{\Baire} \exists n^{\Base}
  P(\overline{\alpha}n) \wedge \text{$P$ is monotone and decidable}
\]
then for any predicate $Q$ on $\Base^{*}$, 
\begin{equation}\label{eq:BarRuleConclusion}
  \HAw \vdash P \subseteq Q \wedge \Ind(Q) \imp Q(\nil).
\end{equation}
\begin{theorem} \label{thm:BarIndRule}
$\HAw$ is closed under the rule of bar induction.
\end{theorem}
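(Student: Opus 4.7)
The plan is to reduce the rule of bar induction to Theorem~\ref{thm:BarCont} applied to a modulus term for $P$. From the $\HAw$-proofs that $P$ is decidable and a bar, I would first extract, by a standard term-existence property of $\HAw$, a closed term $m \colon \Baire \to \Base$ such that $\HAw \vdash \forall \alpha^{\Baire}\, P(\overline{\alpha}(m\alpha))$ (decidability is what makes the existential matrix amenable to realizer extraction, and if convenient a characteristic term $\chi_{P} \colon \Base^{*} \to \Base$ for $P$ may be extracted in the same way). Applying Theorem~\ref{thm:BarCont} to the closed term $m$ yields a closed term $\gamma$ that is a neighbourhood function of $m$ and for which bar induction is $\HAw$-provable.

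The heart of the argument, reasoning in $\HAw$ under the assumptions $P \subseteq Q$ and $\Ind(Q)$, is to show $S_{\gamma} \subseteq Q$; $Q(\nil)$ then follows immediately from bar induction for $S_{\gamma}$. Fix $a$ with $\gamma(a) > 0$ and set $k := \gamma(a) \dotminus 1$. The defining clauses of a neighbourhood function force $m(\alpha) = k$ for every $\alpha$ extending $a$, and hence $P(\overline{\alpha}k)$. If $k \leq \lh{a}$, then $\overline{\alpha}k$ is a prefix of $a$, so monotonicity of $P$ yields $P(a)$ and therefore $Q(a)$. If $k > \lh{a}$, then letting $\alpha$ range over extensions of $a$ yields $P(a * c)$, and so $Q(a * c)$, for every sequence $c$ of length $k - \lh{a}$; Lemma~\ref{lem:inductive} applied with this length, together with $\Ind(Q)$, then gives $Q(a)$. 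Either way, $S_{\gamma}(a) \imp Q(a)$.

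I expect the main obstacle to lie entirely in this initial extraction step, which is metatheoretic rather than internal: producing the closed $\HAw$-term $m$ from the given $\HAw$-proof that $P$ is a decidable bar. In the extensional $\HAw$ adopted here the function-existence property is subtle and, as the paper notes in the context of Dialectica, not directly available through the interpretations used by previous authors; but it holds in the shape needed here because decidability collapses the existential matrix to an essentially equational statement, so one can, for instance, pass through the intensional variant of $\HAw$ and apply modified realizability, and the stopping-function extraction of Oliva and Steila~\cite{Oliva_Steila_bar_recursion_closure} adapts directly. Once this extraction is in place, the remainder of the proof is a clean combination of Theorem~\ref{thm:BarCont}, monotonicity of $P$, and Lemma~\ref{lem:inductive}.
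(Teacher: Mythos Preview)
Your proposal is correct and follows essentially the same route as the paper: extract a modulus term via modified realizability (the paper simply cites Troelstra~\cite[Chapter 3, Section 4]{Troelstra1973} for this step, without the extended discussion you give), apply Theorem~\ref{thm:BarCont} to it, and then verify $S_{\gamma} \subseteq Q$ using monotonicity of $P$ together with Lemma~\ref{lem:inductive}. The only cosmetic difference is that the paper sets $k := \gamma(a) \dotminus \lh{a}$ directly as the length parameter for Lemma~\ref{lem:inductive}, which absorbs your case split $k \le \lh{a}$ versus $k > \lh{a}$ into a single uniform step.
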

\begin{proof}
  Let $P(a^{\Base^{*}})$ be a predicate on $\Base^{*}$ without
parameters other than $a$. Suppose that
\[
  \HAw \vdash \forall \alpha^{\Baire} \exists n^{\Base}
  P(\overline{\alpha}n) \wedge \text{$P$ is  monotone and decidable}.
\]
By the modified realizability \cite[Chapter 3, Section
4]{Troelstra1973}, we have a closed term $Y \colon \Baire \to \Base$ such that
\[
  \HAw \vdash \forall \alpha^{\Baire} P(\overline{\alpha}Y\alpha).
\]
By Theorem \ref{thm:BarCont}, there is a neighbourhood function $\gamma \colon
 \Base^{*} \to \Base$  of $Y$ which
 satisfies \eqref{eq:BI} for any predicate $Q$ on $\Base^{*}$.

 Let $Q$ be a predicate on $\Base^{*}$. We show that
 \begin{equation}\label{eq:PtoQ}
   P \subseteq Q \wedge \Ind(Q) \imp S_{\gamma} \subseteq Q,
 \end{equation}
 from which \eqref{eq:BarRuleConclusion} follows immediately.
Suppose that $P \subseteq Q$ and that $Q$ is inductive. Let
$a^{\Base^{*}}$ satisfy $S_{\gamma}$, i.e., $\gamma(a) > 0$, 
and put $k := \gamma(a) \dotminus \lh{a}$. Since $P$ is monotone, 
we have $P(a*b)$ for all $b$ such that $\lh{b} = k$.
Thus $Q(a*b)$ for all $b$ such that $\lh{b} = k$.
Hence $Q(a)$ by Lemma \ref{lem:inductive}. Therefore $S_{\gamma} \subseteq Q$.
\end{proof}

\subsection{Closure of bar recursion for the lowest type}\label{sec:BarRecursion}
For each pair of types $\tau, \sigma$, Spector's bar recursion
is the following schema:
\begin{align}\label{eq:BarRec}
  \BR^{\tau,\sigma}(Y,G,H)(a) = \begin{cases}
    G(a)& \text{if $Y(\AppendZero{a}) < \lh{a}$}\\
    H(a, \lambda x. \BR^{\tau,\sigma}(Y,G,H)(a * \langle x \rangle))& \text{otherwise}
\end{cases}
\end{align}
where $a \colon \tau^{*}$, $G \colon \tau^{*} \to \sigma$, $H \colon \tau^{*} \to
(\tau \to \sigma) \to \sigma$, and $Y \colon (\Base \to \tau) \to
\Base$.%
\footnote{Our definition of $\HAw$ does
not include type $\tau^{*}$ of finite sequences for an arbitrary type~$\tau$. 
Hence, bar recursion is understood to
be formulated in an extension of $\HAw$ with the type $\tau^{*}$ of
finite sequences for each type $\tau$. 
}
We call a function $\BR^{\tau,\sigma}$ of  type
\[
 ((\Base \to \tau) \to \Base)  \to (\tau^{*} \to \sigma) \to (\tau^{*} \to (\tau \to \sigma) \to \sigma) \to \tau^{*} \to \sigma
\]
which satisfies \eqref{eq:BarRec} a \emph{bar recursor} of types
$\tau$ and $\sigma$. The first argument of a bar recursor, i.e., a
function of type $(\Base \to \tau) \to \Base)$, is called a
\emph{stopping function} of bar recursion.

Schwichtenberg \cite{SchwichtenbergBarRec01} showed that if
$Y,G$, and $H$ are closed terms of G\"odel's system~$\SystemT$ and the
type $\tau$ is of level $0$ or $1$, then the function $\lambda a.
\BR(Y,G,H)(a)$ which satisfies \eqref{eq:BarRec} is
$\SystemT$-definable. His proof requires a detour through a
system based on infinite terms. Oliva and Steila
\cite{Oliva_Steila_bar_recursion_closure} strengthened
Schwichtenberg's result by giving an explicit construction of the
function $\lambda G.\lambda H.\lambda a. \BR(Y,G,H)(a)$ from a closed
term $Y \colon (\Base \to \tau) \to \Base$ (for type $\tau$ of level $0$
and $1$), and showed that it satisfies the defining equation of bar
recursion for any $G, H$ and $a$.

We give another construction of a bar recursive function for the
lowest type from a closed term $Y \colon \Baire \to \Base$ using
modified realizability.

\begin{theorem}\label{thm:ClosureBarRec}
For any type $\sigma$ and a closed term $Y \colon \Baire \to \Base$,
there exists a closed term $\xi$ of type 
\[
  (\Base^{*} \to \sigma) \to (\Base^{*} \to (\Base \to \sigma) \to \sigma) \to \Base^{*} \to \sigma
\]
which satisfies the defining equation of the bar recursion for
$Y$, i.e., 
\begin{equation}\label{eq:BarRecforY}
  \HAw \vdash
  \forall G \forall H \forall a \;
  \xi(G,H)(a) = \begin{cases}
    G(a)& \text{if $Y(\AppendZero{a}) < \lh{a}$}\\
    H(a, \lambda x. \xi(G,H)(a * \langle x \rangle))&
    \text{otherwise}.
\end{cases}
\end{equation}
\end{theorem}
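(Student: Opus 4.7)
The plan is to combine Theorem~\ref{thm:BarCont} with modified realizability in the same spirit as the proof of Theorem~\ref{thm:BarIndRule}. First, I would apply Theorem~\ref{thm:BarCont} to the given closed term $Y$ to obtain a closed $\HAw$-term $\gamma \colon \Base^{*} \to \Base$ which is a neighbourhood function of $Y$ and for which bar induction holds. Abbreviate the bar-recursion equation at $c$ by $E(f, c)$: ``$f(c) = G(c)$ if $Y(\AppendZero{c}) < \lh{c}$, and $f(c) = H(c, \lambda x^{\Base}. f(c * \seq{x}))$ otherwise''. The aim is to prove in $\HAw$ that
\[
  \forall G \forall H \, \exists f^{\Base^{*} \to \sigma} \, \forall c^{\Base^{*}} \, E(f, c),
\]
and to extract $\xi$ as a closed $\SystemT$-realizer of this existential.

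The existence statement is established by applying bar induction for $\gamma$ (clause~2 of Theorem~\ref{thm:BarCont}) to the predicate
\[
  Q(a) \equiv \exists f^{\Base^{*} \to \sigma} \, \forall b^{\Base^{*}} \, E(f, a * b).
\]
For $S_\gamma \subseteq Q$: when $\gamma(a) > 0$, the defining clauses of a neighbourhood function force $\gamma(a * b) = \gamma(a)$, and hence $Y(\AppendZero{a * b}) = \gamma(a) \dotminus 1$ for every $b$; the stopping condition at $a * b$ therefore reduces to the decidable condition $\lh{a * b} \geq \gamma(a)$, and a witness $f$ can be constructed by primitive recursion on $\gamma(a) \dotminus \lh{a * b}$ along extensions of $a$ (setting $f$ arbitrarily elsewhere, e.g.\ equal to $G$). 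For $\Ind(Q)$: from $\forall n^{\Base} \, Q(a * \seq{n})$, choose by $\CC$ a family $(f_{n})_{n}$ of witnesses, and assemble a single witness $f$ for $Q(a)$ by setting $f(a * \seq{n} * b') := f_{n}(a * \seq{n} * b')$, and defining $f(a) := G(a)$ if the stopping condition holds at $a$, and $f(a) := H(a, \lambda x. f_{x}(a * \seq{x}))$ otherwise; a direct case analysis on whether $b = \nil$ or $b$ properly extends some $\seq{n}$ then yields $\forall b \, E(f, a * b)$.

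Bar induction delivers $Q(\nil)$, and modified realizability (as in the proof of Theorem~\ref{thm:BarIndRule}) extracts from this proof a closed $\SystemT$-term $\xi$ of the required type such that $\xi(G, H)$ satisfies $\forall c \, E(\xi(G, H), c)$, which is precisely equation~\eqref{eq:BarRecforY}. The main subtlety lies in the inductive step, which needs countable choice at the function type $\Base^{*} \to \sigma$ in order to collect the sequence $(f_{n})_{n}$; although $\CC$ at this type is not a theorem of $\HAw$, it is validated by modified realizability, so the realizer extracted remains a closed term of $\SystemT$, as required.
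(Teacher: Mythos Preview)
Your proposal is correct and follows the same overall strategy as the paper: apply Theorem~\ref{thm:BarCont} to $Y$, prove the existence of a bar-recursive functional by bar induction (using $\CC$ in the inductive step), and extract the closed witness via modified realizability. The one technical variation is in the base case: the paper first replaces the bar $S_{\gamma}$ by $S_{\gamma} \cap P_{Y}$ with $P_{Y}(a) \equiv Y(\AppendZero{a}) < \lh{a}$ (via the argument of Theorem~\ref{thm:BarIndRule}), so that the witness there is the trivial $\lambda G.\lambda H.\,G$, whereas you keep $S_{\gamma}$ and build the witness by a short primitive recursion on $\gamma(a) \dotminus \lh{a*b}$; both treatments are sound.
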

\begin{proof}
    Fix a type $\sigma$ and a closed term $Y \colon \Baire  \to
    \Base$.
    By Theorem \ref{thm:BarCont}, there is a closed term
    $\gamma:{\Base^{*} \to \Base}$ which is a neighbourhood function
    of $Y$ and satisfies \eqref{eq:BI} for any predicate $Q$ on
    $\Base^{*}$.
    Define a predicate $P_{Y}$ on $\Base^{*}$ by  
    \[
      P_{Y}(a) \equiv Y(\AppendZero{a}) < \lh{a}.
    \]
    For any predicate $Q$ on $\Base^{*}$, it is straightforward to
    show that 
    \[
      P_{Y} \cap S_{\gamma} \subseteq Q \wedge \Ind(Q) \imp S_{\gamma}
      \subseteq Q.
    \]
    See the argument following \eqref{eq:PtoQ}. Thus
    \begin{equation*}
      \HAw \vdash P_{Y} \cap S_{\gamma} \subseteq Q \wedge \Ind(Q) \imp Q(\nil).
    \end{equation*}
    Define a predicate $Q$ on ${\Base^{*}}$ by  
    \[
      Q(a) \equiv \exists \xi \; \calBR(\xi,a),
    \]
  where $\xi$ is of type
  $
  (\Base^{*} \to \sigma) \to (\Base^{*} \to (\Base \to \sigma) \to
  \sigma) \to \Base^{*} \to \sigma
  $ 
  and  $\calBR(\xi,a)$ is the following formula:
  \begin{align*}
    \forall G \forall H \forall b \; \xi(G,H)(b) = \begin{cases}
      G(b)& \text{if $Y(\AppendZero{a*b}) < \lh{a} + \lh{b}$}\\
      H(b, \lambda x. \xi(G,H)(b * \langle x \rangle))&
      \text{otherwise}.
   \end{cases}
  \end{align*}
  We show that $P_{Y} \cap S_{\gamma}
  \subseteq Q$ and that $Q$ is inductive.  
  \smallskip

  \noindent \emph{$P_{Y} \cap S_{\gamma} \subseteq Q$}: Let $a^{\Base^{*}}$ 
  satisfy $P_{Y} \cap S_{\gamma}$. Then, for any
  $b^{\Base^{*}}$ we have 
  $Y(\AppendZero{a*b}) = Y(\AppendZero{a}) < \lh{a} \leq \lh{a} + \lh{b}$. 
  Hence, the function $\xi := \lambda G. \lambda H. G$ witnesses
  $Q(a)$.
  \smallskip

  \noindent\emph{$Q$ is inductive}:
  Suppose that $\forall x^{\Base} Q(a * \seq{x})$.
  For each $x^{\Base}$, there exists a function $\xi_{x}$ such that
  $\calBR(\xi_{x},a * \seq{x})$.
  By countable choice ($\CC$), there exists a sequence
  $(\xi_{x})_{x^{\Base}}$ of functions  such that
  $\calBR (\xi_{x}, a*\seq{x})$ for each $x$.

  For any $G \colon \Base^{*} \to \sigma$, $H\colon \Base^{*} \to (\Base \to
  \sigma) \to \sigma$ and $b\colon\Base^{*}$, define $\xi(G,H)(b)$ by
  induction on the length of $b$:
  \begin{align*}
    \xi(G,H)(\nil) &:=
    \begin{cases}
      G(\nil) & \text{if $Y(\AppendZero{a}) < \lh{a}$}\\
      H(\nil, \lambda x. \xi_{x}(G_{x}, H_{x})(\nil))
      &\text{otherwise}
    \end{cases}\\
    \xi(G,H)(\seq{x}* b) &:=
    \xi_{x}(G_{x},H_{x})(b)
  \end{align*}
  where $G_{x}$ and $H_{x}$ are defined by
  \begin{align*}
    G_x(b) &:= G(\seq{x} * b),
    \\
    H_x(b,f) &:= H(\seq{x}*b,f).
  \end{align*}
  We must show that $\calBR(\xi, a)$. Fix $G,H$, and  $b$. We distinguish
  two cases:
  \smallskip

  \noindent\emph{Case $b \equiv \nil$}: If $Y(\AppendZero{a}) < \lh{a}$, then
  $\xi(G,H)(\nil) = G(\nil)$. Otherwise
  \[
    \xi(G,H)(\nil) = H(\nil, \lambda x. \xi_{x}(G_{x},H_{x})(\nil)) =
H(\nil, \lambda x. \xi(G,H)(\seq{x})).
\]
  \noindent\emph{Case $b \equiv \seq{x} * b'$ for some $b'$}: 
  If $Y(\AppendZero{a * b}) < \lh{a} + \lh{b}$, then 
  $Y(\AppendZero{a * \seq{x}* b'}) < \lh{a * \seq{x}} + \lh{b'}$. Thus
    $
    \xi(G,H)(b) = \xi_{x}(G_{x}, H_{x})(b') = G_{x}(b') = G(b).
    $
  Otherwise
  \begin{align*}
    \xi(G,H)(b) &= \xi_{x}(G_{x},H_{x})(b') \\
    &= H_{x}(b', \lambda y.  \xi_{x}(G_{x},H_{x})(b' * \seq{y})) \\
    &= H_{x}(b', \lambda y. \xi(G,H)(b * \seq{y}))\\
    &= H(b, \lambda y. \xi(G,H)(b * \seq{y})).
  \end{align*}
  Hence $\calBR(\xi, a)$. Thus $Q$ is inductive.
  \smallskip

  Therefore $Q(\nil)$, and so
  \[
    \HAw + \CC \vdash \exists \xi \calBR(\xi,\nil).
  \]
  Since $\calBR(\xi,\nil)$ is a purely universal statement, 
  modified realizability yields a witness $\xi$ as a
  closed term of $\HAw$ for which \eqref{eq:BarRecforY} holds.
  Note that since countable choice is modified realizable, the use of
  countable choice is eliminated in the last step.
\end{proof}

\subsection*{Acknowledgements}
I thank Mart{\'\i}n Escard\'o, Makoto Fujiwara, Paul-Andr\'{e}
Melli\`{e}s, Paulo Oliva, and Giuseppe Rosolini for useful
discussions. I also thank Ulrich Kohlenbach for the reference to his
proof of the fan rule for $\HAw$.  This work was carried out while I
was visiting the Hausdorff Research Institute for Mathematics (HIM),
University of Bonn, for their trimester program ``Types, Sets and
Constructions'' (May--August 2018). I thank the institute for their
support and hospitality, and the organisers of the program for
creating a stimulating environment for research.


\end{document}